\documentclass[usenames,dvipsnames, 11pt]{article}
\usepackage[utf8]{inputenc}

\usepackage[margin=2.5cm,a4paper]{geometry} 
\usepackage{multirow,listings,setspace,gnuplottex,latexsym,keyval,ifthen,moreverb,lscape,forest}
\usepackage{pgf,tikz}
\usetikzlibrary{arrows}

\usepackage{amsmath, amssymb, amsthm, bm}
\usepackage{thm-restate}
\usepackage{hyperref}
\usepackage[capitalize]{cleveref}
\usepackage{color}
\usepackage{url}
\usepackage{enumitem}
\usepackage{graphicx}
\usepackage{appendix}
\usepackage{cite}
\usepackage{mathtools}

\newtheorem{thm}{Theorem}[section]
\crefname{thm}{Theorem}{Theorems}

\newtheorem{lem}[thm]{Lemma}
\newtheorem{cor}[thm]{Corollary}
\newtheorem{clm}[thm]{Claim}
\newtheorem{conj}[thm]{Conjecture}

\numberwithin{thm}{section}

\theoremstyle{definition}

\newtheorem{rem}[thm]{Remark}
\newtheorem{ques}[thm]{Question}

\DeclarePairedDelimiter{\parens}{(}{)}
\DeclarePairedDelimiter{\set}{\{}{\}}

\DeclarePairedDelimiter{\floor}{\lfloor}{\rfloor}
\DeclarePairedDelimiter{\ceil}{\lceil}{\rceil}

\DeclarePairedDelimiter{\size}{|}{|}

\usepackage{diagbox}

\newcommand{\eps}{\varepsilon}
\newcommand{\E}{\mathop{\mathbb{E}}}

\newcommand{\qedclaim}{\hfill$\blacksquare$}
\newenvironment{proofclaim}{\removelastskip\penalty55\medskip\noindent{\it Proof of the claim.}}{\medskip\phantom{.}\hfill\qedclaim}

\title{Rainbow connecting $2$-colorings of super-Dirac graphs}
\author{J\'anos Bar\'at\thanks{HUN-REN Alfr\'ed R\'enyi Institute of Mathematics, Budapest, Hungary. Emails:\,
\texttt{barat@mik.uni-pannon.hu}
\texttt{simona@renyi.hu}, \texttt{freschi.andrea@renyi.hu}} \thanks{University of Pannonia, Veszprém, Hungary} \quad Simona Boyadzhiyska\footnotemark[1] \quad Andrea Freschi\footnotemark[1]}

\begin{document}
\maketitle

\begin{abstract}
    Let~$G$ be a graph with minimum degree~$\delta(G)\ge\size{V(G)}/2$.
    Can we color the edges of~$G$ with red and blue so that every pair of non-adjacent vertices is connected by a path consisting of exactly one red edge and one blue edge? 
    We~provide an affirmative answer to this question for a class of graphs that are ``close'' to a complete balanced bipartite graph or the disjoint union of two cliques of the same order. 
    Surprisingly, our methods extend to a much broader class of graphs with minimum degree slightly above $\size{V(G)}/2$.
    Furthermore, we answer an asymptotic version of this question in full, proving that every graph $G$ satisfying $\delta(G)\ge(\size{V(G)}-1)/2$ has a $2$-edge-coloring such that almost all pairs of vertices are connected by a rainbow path.
    In addition, we~propose a number of related open problems.
\end{abstract}

\section{Introduction}

The study of graph connectivity is a central topic in combinatorics and computer science.
A~graph is {\it connected} if there is a path between every pair of vertices.
In this paper, we study a strengthening of this notion called \emph{rainbow connectivity}.
An edge-colored graph $G$ is {\it rainbow connected} if any two vertices of $G$ are connected by a rainbow path, i.e., a path that does not use a color more than once. 
This concept was formally introduced by Chartrand, Johns, McKeon, and Zhang in~\cite{cjmz}. 
One can think of the following natural application to security networks.
Vertices represent agents, edges represent channels through which two agents can communicate, and colors correspond to passwords. 
The further away two agents are in this graph, the more channels the information they exchange needs to cross and the more likely it is to be intercepted. Using the same password to protect two different channels along the way increases the chance for the message to be compromised.    
Information exchanged through rainbow paths is encrypted by multiple, pairwise-distinct passwords.
If the graph is rainbow connected, then every pair of agents can exchange information safely. Naturally, it is desirable for the number of different passwords to be as small as possible.

The minimum number of colors required to color the edges of a graph~$G$ so that it is rainbow connected is called the {\it rainbow connection number} of~$G$ and is denoted by~$\mathrm{rc}(G)$.\footnote{Note that~$\mathrm{rc}(G)$ is well-defined if and only if~$G$ is connected. 
If~$G$ is disconnected, we write $\mathrm{rc}(G)=\infty$.}
Note that, since a rainbow path uses as many colors as its number of edges, $\mathrm{rc}(G)$ is always bounded below by the {\it diameter} of~$G$, i.e., the maximum length of a shortest path between two vertices in~$G$.
It~is easy to see that $\mathrm{rc}(G)=1$ if and only if~$G$ is complete. 
On the other hand, determining whether a graph~$G$ satisfies~$\mathrm{rc}(G)=2$ is NP-complete~\cite{ChakrabortyFMY}, and thus we cannot expect a clean characterization of graphs possessing this property. 
Therefore, researchers have been interested in obtaining bounds on~$\mathrm{rc}(G)$, particularly with respect to specific parameters, including the minimum degree, the edge number and the graph radius (see e.g., \cite{ChandranDRV,ChakrabortyFMY,CaroLRTY,Schiermeyer,KemnitzS,BasavarajuCRR}).

In this paper, we are interested in minimum degree conditions that ensure that $\mathrm{rc}(G)\le2$.
This problem was proposed in~\cite[Problem 4.3]{CaroLRTY}.
For a graph~$G$, we write~$v(G)$ and~$\delta(G)$ to denote the number of vertices and the minimum degree of~$G$, respectively.
Several groups of authors~\cite{CaroLRTY,conj,gim,survey} posed the following question, which is a major open problem in the area. 

\begin{ques}\label{q:dirac}
    Does every graph $G$ with $\delta(G)\geq v(G)/2$ satisfy $\mathrm{rc}(G)\le2$?
\end{ques}

\begin{rem}\label{remark:non-adjcanet}
    Note that a colored edge is a rainbow path connecting its endpoints.
    Therefore, $\mathrm{rc}(G)\le2$ if and only if there is a $2$-edge-coloring of~$G$ such that every non-adjacent pair of vertices is connected by a path consisting of exactly two edges of opposite colors.
\end{rem}

Note that \cref{q:dirac} has a negative answer if we replace~$v(G)/2$ with a lower number.
Indeed, fix an integer $k\ge1$ and let~$H$ be obtained from a cycle $C=v_1v_2\dots v_{4k+1}v_1$ by adding an edge between every pair of vertices at distance at most~$k$ in~$C$.
Note that~$H$ has~$4k+1$ vertices and it is $2k$-regular.
Consider an arbitrary $2$-edge-coloring of~$H$.
The edges in~$H$ corresponding to vertices at distance~$k$ in~$C$ form a cycle~$C'$ on~$4k+1$ vertices.
Since~$C'$ has odd order, there must be two consecutive edges in~$C'$ of the same color, say~$v_1v_{k+1}$ and~$v_{k+1}v_{2k+1}$.
Crucially,~$v_{k+1}$ is the unique common neighbor of~$v_1$ and~$v_{2k+1}$, so~$H$ is not rainbow connected.
Thus, $\mathrm{rc}(H)\ge3$.

\smallskip

\cref{q:dirac} is still wide open.
It is known that $\delta(G)\ge (v(G)-1)/2$ 
implies $\mathrm{rc}(G)\le 3$, provided~$v(G)$ is sufficiently large.
This follows from the following result.

\begin{thm}[Chakraborty, Fisher, Matsliah, Yuster~\cite{ChakrabortyFMY}]\label{thm:ChakrabortyFMY}
    There is an absolute constant $C>0$ such that, if $\delta(G)\ge C\log_2 n$  and $G$ has diameter at most two, then $\mathrm{rc}(G)\le 3$. 
\end{thm}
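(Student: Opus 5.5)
We use a random $3$-edge-coloring and aim to rainbow connect every pair by a path of length at most three. Let $n=v(G)$ and color each edge of $G$ independently and uniformly with one of three colors. Adjacent pairs are rainbow connected by their edge, so fix a non-adjacent pair $u,v$. Since $G$ has diameter at most two, $u$ and $v$ have at least one common neighbor, but possibly only one. If they have many common neighbors, say at least $C'\log_2 n$, then each common neighbor $w$ gives a rainbow path $uwv$ with probability $2/3$, independently over distinct $w$. The failure probability is then at most $3^{-C'\log_2 n}$, which is $o(n^{-2})$ when $C'$ is large.

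The main obstacle is pairs with few common neighbors; for these we will use paths of length three. Take $N(u)\setminus N(v)$ and $N(v)\setminus N(u)$, each of size at least $\delta(G)-C'\log_2 n$. Every $x\in N(u)\setminus\{v\}$ is at distance at most two from $v$, so either $x\sim v$ or $x$ has a common neighbor with $v$. We would like many vertex-disjoint paths $u\,x\,y\,v$ with $x\in N(u)$ and $y\in N(v)$, because each such path is rainbow with probability $3!/27=2/9$, independently over edge-disjoint paths.

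To find them, I would look at the bipartite structure between $A=N(u)$ and $B=N(v)$. Each $x\in A\setminus N(v)$ with $x\neq v$ has a neighbor in $B$, since its path of length two to $v$ passes through $B$. Greedily choose pairs $(x_i,y_i)$ with $x_i\in A$, $y_i\in B$ and $x_iy_i\in E(G)$, using distinct $x_i$ and distinct $y_i$.

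A difficulty arises if all of these $x$ share a single neighbor $y\in B$, which would give a star rather than a matching. I would then switch roles and use the neighbors of $y$ symmetrically. More simply, I would count edges: a matching of size $m$ between $A$ and $B$ gives $m$ internally disjoint $u$–$v$ paths of length at most three. If the maximum matching has size less than $k=C'\log_2 n$, König's theorem gives a vertex cover $S$ of the $A$–$B$ edges with $|S|<k$. Then every $x\in A\setminus S$ must reach $v$ through $S\cap B$, and every $y\in B\setminus S$ must reach $u$ through $S\cap A$. This forces a small set through which all short $u$–$v$ paths pass.

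I expect this bottleneck case to be the real difficulty, and it is where $\delta(G)\ge C\log_2 n$ must be used more cleverly. Pure independent random coloring may then fail. A fix is a two-round coloring: first color randomly, then recolor edges at the few high-"bottleneck" vertices deterministically, or use a modified random process in which each vertex picks a random color class for its incident edges, as Chakraborty et al. do. Once $k$ disjoint paths are guaranteed for every non-adjacent pair, the union bound over at most $n^2$ pairs, with per-pair failure at most $(7/9)^{k}=o(n^{-2})$, completes the proof.
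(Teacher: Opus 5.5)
The paper does not prove this statement (it is quoted from Chakraborty et al.), so I am judging your argument on its own, and it has a genuine gap. For non-adjacent pairs $u,v$ with few common neighbours, you reduce everything to finding $k=C'\log_2 n$ disjoint $u$--$v$ paths of length at most three. You correctly notice that this breaks down when the bipartite graph between $N(u)$ and $N(v)$ has a small vertex cover, but you then defer that case to an unspecified ``two-round'' recolouring or ``modified random process''. So the case is not handled.

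Worse, the reduction cannot be rescued, because such disjoint paths need not exist. Take a vertex $y$ adjacent to all other vertices, and two cliques $X\cup\{u\}$ and $Y\cup\{v\}$ of size at least $C\log_2 n+1$, with no edges between $X\cup\{u\}$ and $Y\cup\{v\}$. This graph has diameter two and large minimum degree. However, every $u$--$v$ path of length at most three uses $uy$ or $yv$, so there are at most two edge-disjoint such paths. Thus ``once $k$ disjoint paths are guaranteed for every non-adjacent pair'' cannot be arranged in general. Your remark that pure independent random colouring may fail is also mistaken: the uniform random $3$-colouring you started with already works, provided you drop disjointness.

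The missing idea is that the paths only need to be independent \emph{after conditioning}.
\begin{itemize}
\item For each $x\in N(u)\setminus N(v)$, diameter two gives some $y_x\in N(v)$ adjacent to $x$, and $y_x\neq u$ since $u\notin N(v)$.
\item Because $u\notin N(v)$, $x\notin N(v)$ and $y_x\in N(v)$, the edges $ux$ and $xy_x$ are pairwise distinct as $x$ varies, and none of them is incident to $v$. Only the last edges $y_xv$ can coincide; this is exactly your ``star'' case.
\item Condition on the colours of all edges at $v$. The events that $u\,x\,y_x\,v$ is rainbow are then independent over $x$, each of conditional probability exactly $2/9$.
\end{itemize}
Since $|N(u)|\ge k:=C\log_2 n$, one of two cases holds.
\begin{itemize}
\item If $|N(u)\cap N(v)|\ge k/2$, the edge-disjoint paths $uwv$ give failure probability at most $3^{-k/2}$.
\item Otherwise $|N(u)\setminus N(v)|\ge k/2$, and the paths above give failure probability at most $(7/9)^{k/2}$.
\end{itemize}
With $C=12$ both bounds are below $n^{-2}$, since $6\log_2(9/7)>2$. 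A union bound over the fewer than $n^2/2$ non-adjacent pairs then finishes the proof. No matching, K\"onig argument, or recolouring step is needed.
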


\begin{rem}
    It is easy to show that, if $\delta(G)\ge (v(G)-1)/2$, then every pair of non-adjacent vertices in~$G$ has at least one common neighbor, and so \cref{thm:ChakrabortyFMY} applies.
\end{rem}

It is also known that, if~$G$ is an $n$-vertex graph with $\delta(G)\geq n/2+\log_2 n$, then $\mathrm{rc}(G)\le2$ ~\cite[Theorem~1.5]{CaroLRTY}.
In fact, the proof of this result shows that a random $2$-edge-coloring of such~$G$ is rainbow connected with high probability.

\begin{thm}[{Caro, Lev, Roddity, Tuza, Yuster~\cite{CaroLRTY}}]\label{prop:random-coloring}
Suppose $G$ is an $n$-vertex graph with $\delta(G)\geq n/2+\log_2 n$. 
Then $\mathrm{rc}(G)\le2$.
\end{thm}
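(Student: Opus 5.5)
We color each edge of $G$ red or blue independently and uniformly at random, and show that with positive probability the resulting coloring is rainbow connected. By \cref{remark:non-adjcanet}, it suffices to show that with positive probability every non-adjacent pair $\{u,v\}$ has a common neighbor $w$ such that $uw$ and $wv$ receive different colors.

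First we count common neighbors. Fix a non-adjacent pair $u,v$. Both $N(u)$ and $N(v)$ lie in $V(G)\setminus\{u,v\}$, a set of size $n-2$. Hence
\[
\size{N(u)\cap N(v)}\ge 2\delta(G)-(n-2)\ge 2\log_2 n+2 .
\]
Let $k$ be the number of common neighbors, so $k\ge 2\log_2 n+2$.

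Next we bound the failure probability for this pair. For each common neighbor $w$, the path $uwv$ is rainbow with probability $1/2$. The paths for distinct $w$ use disjoint pairs of edges, so these events are independent. Therefore the probability that no path $uwv$ is rainbow is
\[
2^{-k}\le 2^{-2\log_2 n-2}=\frac{1}{4n^2}.
\]

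Finally we take a union bound. There are fewer than $n^2/2$ non-adjacent pairs, so the probability that some pair fails is less than $\frac{n^2}{2}\cdot\frac{1}{4n^2}=\frac18<1$. Hence some $2$-edge-coloring makes every non-adjacent pair rainbow connected by a path of length two. Every adjacent pair is connected by its own edge, which is trivially rainbow. So $\mathrm{rc}(G)\le 2$.

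The only point needing care is the independence across different $w$. It holds because each path $uwv$ uses its own two edges $uw$ and $wv$, and no edge is shared between paths through different middle vertices. Everything else is direct counting.
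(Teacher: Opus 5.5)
Your proof is correct and is essentially the paper's argument: color every edge uniformly at random, use independence over the edge-disjoint paths $uwv$, and take a union bound over pairs. The paper packages this as \cref{lem:random_coloring}, which applies to any pair with at least $2\log_2 n$ common neighbors, and the theorem then follows from your count that every non-adjacent pair has at least $2\log_2 n+2$ common neighbors.
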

We will present the proof in \cref{sec:random}, for a slight strengthening of this result will play a key role in our arguments.
\medskip

\subsection{Results}
We are mainly concerned with tackling \cref{q:dirac} for some classes of graphs, where we may allow the minimum degree to be slightly higher than in the original formulation.
For convenience, we refer to a graph $G$ satisfying $\delta(G)\geq v(G)/2$ as a \emph{Dirac graph}, after Dirac's well-known result providing a sufficient condition for the existence of a Hamilton cycle~\cite{Dirac}. 
When we wish to emphasize that $G$ has minimum degree strictly exceeding $\lceil v(G)/2\rceil$,
 we say that $G$ is a \emph{super-Dirac graph}. 

Dirac graphs have been studied extensively in extremal combinatorics. Among other properties, it is known that graphs $G$ of minimum degree roughly $v(G)/2$ exhibit a particular trichotomy: they are either `close' to a balanced complete bipartite graph or a disjoint union of two complete graphs on about $v(G)/2$ vertices each, or they have good expansion properties (see~\cite[Lemma~26]{klo} for the precise formulation of this statement). 

For brevity, we call a rainbow connecting $2$-edge-coloring an rc$2$\emph{-coloring}. 
We write~$K_{n,n}$ for the complete bipartite graph with~$n$ vertices in each class, and~$2K_n$ for the disjoint union of two cliques of~$n$ vertices each.
Our first two results state that Dirac graphs that roughly resemble~$K_{n,n}$ or~$2K_n$ admit rc2-colorings.

\begin{thm}[Dirac graphs close to~$K_{n,n}$]\label{thm:Dirac-K_nn}
    Let~$G$ be a graph on~$2n$ vertices with $\delta(G)\ge n$, where $n$ is sufficiently large.
    If there is a partition $V(G)=A\sqcup B$ such that $|A|=|B|=n$ and every vertex has at most~$\sqrt{n}/3$ neighbors in its own part, then~$G$ admits an $\mathrm{rc2}$-coloring.
\end{thm}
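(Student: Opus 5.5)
Let me set notation first. Write $G[A,B]$ for the bipartite graph of cross edges. For a vertex $v$, let $d_{\mathrm{in}}(v)$ and $d_{\mathrm{out}}(v)$ denote the numbers of neighbours of $v$ in its own part and in the other part.

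By assumption $d_{\mathrm{in}}(v)\le \sqrt n/3$, and $\delta(G)\ge n$ gives $d_{\mathrm{out}}(v)\ge n-\sqrt n/3$. So every vertex misses at most $\sqrt n/3$ vertices on the other side. By \cref{remark:non-adjcanet}, we must connect every non-adjacent pair by a two-edge path with one red and one blue edge. There are two kinds of non-adjacent pairs.
\begin{itemize}
\item Pairs $u,w$ on the same side, say in $A$. These have at least $n-2\sqrt n/3$ common neighbours in $B$, so there is a lot of room.
\item Cross pairs $a\in A$, $b\in B$ with $ab\notin E(G)$. Every two-edge path between them goes through an edge inside $A$ or inside $B$. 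A cross non-edge $ab$ has common neighbours only among $N_{\mathrm{in}}(a)\cup N_{\mathrm{in}}(b)$.
\end{itemize}

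The key counting fact is the following. Since $d_{\mathrm{in}}(a)\ge n-d_{\mathrm{out}}(a)$, the number of inner neighbours of $a$ is at least its number of cross non-neighbours. Moreover, a path $a\,a'\,b$ with $a'\in N_{\mathrm{in}}(a)$ exists as soon as $a'b\in E$. Each $a'$ misses at most $\sqrt n/3$ vertices of $B$. Hence, for a non-edge $ab$, the number of common neighbours is at least
\[
(d_{\mathrm{in}}(a)+d_{\mathrm{in}}(b)) - \#\{\text{inner neighbours of } a \text{ non-adjacent to } b\} - \#\{\text{inner neighbours of } b \text{ non-adjacent to } a\}.
\]
This count could be small, and it must be handled with care. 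Note that $a$ and $b$ with $ab\notin E$ each have at least one inner neighbour.

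The plan is to colour in two stages.
\begin{enumerate}
\item Colour the inner edges (those inside $A$ or inside $B$) first, deterministically. For each vertex $v$, the inner edges at $v$ will serve as the "first edge" for the cross non-edges at $v$.
\item Colour the cross edges. Let each cross edge $xy$ with $x\in A$ receive a colour determined by the inner edges there. Concretely, for each cross non-edge $ab$ we fix one witness. This is either a common neighbour $a'\in N_{\mathrm{in}}(a)\cap N(b)$, or $b'\in N_{\mathrm{in}}(b)\cap N(a)$. We then force the cross edge $a'b$ to have the colour opposite to $aa'$.
\end{enumerate}
For this to be consistent, we need a system of distinct representatives. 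We want an assignment of non-edges $ab$ to cross edges $a'b$ (or $ab'$) that is injective, or at least free of conflicting demands.

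I would set this up as a bipartite matching. One side consists of the cross non-edges, at most $n\sqrt n/3$ of them. The other side consists of the cross edges, and $ab$ may use $a'b$ for $a'\in N_{\mathrm{in}}(a)\cap N(b)$. Hall's condition should follow from the degree bounds. Each cross edge $a'b$ is demanded only by pairs $(a,b)$ with $a\in N_{\mathrm{in}}(a')$, so it is demanded by at most $\sqrt n/3$ pairs. Each non-edge, in turn, has many options, so a fractional or greedy argument should work. Forced cross edges that are used by two demands from the same inner edge colour are consistent anyway. The real difficulty is that $aa'$ and $a''a'$ might carry opposite colours, and both might want the edge $a'b$. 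To avoid this, I would colour all inner edges at each vertex using a rule that depends only on the endpoint. Colour each inner edge red, so that every forced cross edge used as a witness is blue.

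With this rule, every witness path is "inner red, cross blue" and there are no conflicts. Cross edges that are not forced are then coloured at random, which also handles same-side pairs. Fix a same-side pair $u,w\in A$. Its at least $n-\sqrt n$ common neighbours in $B$ give paths $u\,x\,w$. Only the cross edges forced blue are non-random. Each vertex has few forced cross edges, at most about $\sqrt n\cdot\sqrt n/3$ in the worst case, which is a constant fraction of $n$. Here I need to restrict forcing to at most one witness per non-edge and bound the forced degree at each vertex. We then need some $x$ with $ux$ red and $xw$ blue (or the reverse) among at least $\Omega(n)$ free common neighbours. This fails with probability $2^{-\Omega(n)}$, and a union bound over the $n^2$ pairs finishes, in the spirit of \cref{prop:random-coloring}.

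The main obstacle is the case where $a$ has very few inner neighbours (say one, $a'$) and $a'$ happens to be non-adjacent to $b$, so that the witness must come from $b$'s side. I would exploit $d_{\mathrm{in}}\ge$ the number of cross non-neighbours together with a parity or degree count. The aim is to show that $N_{\mathrm{in}}(a)\cup N_{\mathrm{in}}(b)$ always contains a common neighbour. This follows from $\delta\ge n$ via the standard fact that non-adjacent vertices in a graph with $\delta\ge v/2$ have a common neighbour. I also need to ensure that the forced-blue degree at the vertices $x$ stays at most $cn$ with $c<1/2$. The hypothesis $\sqrt n/3$ is exactly what makes $(\sqrt n/3)^2\le n/9$, and this bounds the forced edges at each vertex.
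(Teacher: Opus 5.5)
Your final plan is the paper's proof: colour every edge inside $A$ and inside $B$ red, fix one common neighbour $w_{ab}$ for each cross non-edge $ab$ and force the cross edge of the path $a\,w_{ab}\,b$ blue, bound the number of forced edges at each vertex by about $d^2+d$ with $d\le\sqrt{n}/3$, and colour the remaining cross edges at random to connect same-side pairs. The Hall/matching detour is unnecessary, since all-red inner edges already rule out conflicting demands; the per-vertex bound should be written out as in the paper: $v$ as the witness gives at most $d\cdot d$ pairs (an inner neighbour of $v$, then one of that neighbour's cross non-neighbours), and $v$ as an endpoint of the non-edge gives at most $d$ pairs.
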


\begin{thm}[Dirac graphs close to~$2K_n$]\label{thm:Dirac-2K_n} 
    Let~$G$ be a graph on~$2n$ vertices with $\delta(G)\ge n$, where $n$ is sufficiently large.
    If there is a partition $V(G)=A\sqcup B$ such that $|A|=|B|=n$ and every vertex has at most~$8^{-1}\log_2{n}$ neighbors in the opposite part, then~$G$ admits an $\mathrm{rc2}$-coloring. 
\end{thm}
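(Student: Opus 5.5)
Since every vertex has at most $\frac18\log_2 n$ neighbours across, each vertex of $A$ has at least $n-\frac18\log_2 n$ neighbours in $A$, so $G[A]$ and $G[B]$ are nearly complete. The plan is to colour the edges inside each part with a structured colouring that handles every non-adjacent pair inside a part. We then colour the few cross edges so that every non-adjacent pair $a\in A$, $b\in B$ is joined by a rainbow path $a\,x\,b$. Here $x$ is a common neighbour, which necessarily uses one cross edge and one inside edge.

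\emph{Cross pairs first.} Let $a\in A$, $b\in B$ be non-adjacent. Since $\delta(G)\ge n$ and $|A\setminus\{a\}|=n-1$, the vertex $a$ has at least one neighbour in $B$, and likewise $b$ has one in $A$. The sets $N(a)$ and $N(b)$ both have size at least $n$ inside a $2n$-vertex graph and miss $a,b$, so they intersect. A common neighbour $x\in A$ means $xb$ is a cross edge and $ax$ is an inside edge. The idea is to make inside edges at cross-incident vertices follow a fixed rule. Let $A'\subseteq A$ and $B'\subseteq B$ be the vertices with a cross neighbour; each has at most $\frac18\log_2 n$ cross edges. Fix a random colouring $\phi$ of the cross edges. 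For every $x\in A'$ and every $a\in A$ with $ax$ an edge not in the cross graph's reach, colour $ax$ opposite to $\phi(xb)$ for some $b$. This conflicts when $x$ has several cross neighbours of different colours. To avoid the conflict, colour all cross edges at $x$ with a single colour $c(x)$ for $x\in A'$, and symmetrically $c(y)$ for $y\in B'$. A cross edge $xy$ then needs $c(x)=c(y)$, which fails for odd structures. I would instead take a random colouring and argue probabilistically: for fixed $a,b$ there are many potential $x$, namely all common neighbours. I expect the number of common neighbours to be comparable to the cross degree, which gives failure probability $2^{-\Omega(\text{common})}$, and that may be too weak. This is the main obstacle.

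\emph{Inside pairs.} Inside $A$, the complement of $G[A]$ has maximum degree at most $\frac18\log_2 n$. So any non-adjacent $a,a'\in A$ have at least $n-2-\frac14\log_2 n$ common neighbours in $A$. A uniformly random colouring of $G[A]$ fails for a given pair with probability $2^{-(n-o(n))}$, and a union bound over at most $n^2$ pairs succeeds with huge room. The same holds even after fixing the colours of the $O(n\log n)$ edges incident to special vertices, since each pair still keeps $n-O(\log n)$ free common neighbours. This is the strengthening of \cref{prop:random-coloring} anticipated in the paper.

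\emph{Resolving the obstacle.} Colour cross edges first. Then, for each non-adjacent cross pair $(a,b)$, choose one witness $x$ and force the colour of the single inside edge ($ax$ or $by$) to be opposite to the cross edge. Each inside edge $ax$ with $x\in A'$ may be demanded by several $b\in N_B(x)$ with different colours $\phi(xb)$. To handle this, pick witnesses greedily so that each inside edge receives at most one demand. For a fixed pair $(a,b)$ the candidate witnesses are all of $N(a)\cap N(b)$. Counting via $\delta\ge n$ gives $|N(a)\cap N(b)|\ge |N_B(a)|+|N_A(b)|$ roughly, and the corresponding inside edges $ax$ (resp. $by$) are distinct for distinct $x$. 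So a Hall-type matching between non-adjacent cross pairs and inside edges should exist, since pair $(a,b)$ competes only with pairs $(a,b')$, $b'\in N_B(x)$. Finally, colour the remaining inside edges at random as above, using that only $O(n\log n)$ edges are pre-coloured, and each vertex is incident to at most $O(\log^2 n)$ of them, so inside pairs keep $n-o(n)$ free common neighbours.
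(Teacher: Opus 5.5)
There is a genuine gap, and it sits exactly where the difficulty of the theorem lies.

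\textbf{The pre-colouring count is wrong.} Your last step assumes that only $O(n\log n)$ inside edges are pre-coloured, at most $O(\log^2 n)$ per vertex. This is false for every choice of witnesses. A common neighbour of a non-adjacent cross pair $(a,b)$ lies in $N_A(b)\cup N_B(a)$, so the pair has at most $2d\le\frac14\log_2 n$ common neighbours. Randomness therefore cannot handle these pairs, and each of the at least $n^2-nd$ of them needs a forced inside edge.
\begin{itemize}
\item Under your injective assignment this forces at least $n^2-nd$ inside edges. That is essentially all of them: there are $\frac12\sum_v\deg(v)-e(A,B)$ inside edges, which equals the number of non-adjacent cross pairs when $G$ is $n$-regular.
\item Even if an inside edge served all of the at most $2d$ pairs it is a candidate for, $\Omega(n^2/\log n)$ edges would be forced. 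That is $\Omega(n/\log n)$ per vertex on average.
\item Every vertex has a cross neighbour, so your ``special vertices'' are all vertices.
\end{itemize}
So inside pairs do not keep $n-o(n)$ free common neighbours, and the final random colouring has essentially nothing left to work with.

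\textbf{The matching can fail.} Your inequality $|N(a)\cap N(b)|\ge|N_B(a)|+|N_A(b)|$ is false; only $|N(a)\cap N(b)|\ge 2$ is guaranteed. Also, an edge $ax$ is a candidate for pairs $(x,b')$ with $b'\in N_B(a)$, not only for pairs $(a,b')$. Concretely, take the cross graph to be a disjoint union of copies of $K_{2,2}$. Inside each part, let the non-edges form a perfect matching that avoids pairs from the same $K_{2,2}$. Then $G$ is $n$-regular, and the number of inside edges equals the number $n^2-2n$ of non-adjacent cross pairs. But each of the $n$ inside edges joining two vertices with identical cross neighbourhoods is a candidate for no pair. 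Hence no injective assignment exists.

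\textbf{The missing idea.} The conflict you were fighting disappears if you colour all of $E(A,B)$ red. This is the constant choice of your $c(\cdot)$: the condition $c(x)=c(y)$ on cross edges only asks $c$ to be constant on components of the cross graph, so there is no parity obstruction. Every demand then simply reads ``blue'', and no matching is needed. The real task is to choose the blue inside edges so that every non-adjacent pair $x,y$ in the same part keeps at least $2\log_2 n$ common neighbours $z$ with both $xz$ and $zy$ unforced. The paper achieves this as follows.
\begin{itemize}
\item It splits cross pairs into flexible ones (common neighbours on both sides) and inflexible ones.
\item For a flexible pair, a fair coin decides whether to force $au_{ab}\in E(A)$ or $bv_{ab}\in E(B)$. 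An inside edge is a candidate for at most $2d$ flexible pairs, so it stays unforced with probability at least $2^{-2d}\ge n^{-1/4}$. These events are independent across edges of the same part, and Chernoff gives at least $\sqrt n/4$ free paths for every inside pair.
\item Each vertex lies in at most $2d^2$ inflexible pairs. These therefore force at most $4d^3=O(\log^3 n)$ edges per vertex, which destroys only $O(\log^3 n)$ of those free paths.
\end{itemize}
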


\begin{rem}\label{remark:eps-close}    
    The assumptions of \cref{thm:Dirac-K_nn} and~\ref{thm:Dirac-2K_n} are more restrictive than the notion of ``$\eps$-close'' to~$K_{n,n}$ or~$2K_{n}$ from~\cite[Lemma~26]{klo}.
    See \cref{sec:conclusion} for further discussion.
\end{rem}

We remark that we do not make a serious effort to optimize the constants in \cref{thm:Dirac-K_nn,thm:Dirac-2K_n}, since we tend to believe that \cref{q:dirac} should have an affirmative answer, and hence the maximum degree conditions are not optimal.
We do not know if our methods can give a better order of magnitude for the bound on the maximum degrees.

\medskip

It is not known whether there exists a constant $c>0$ such that $\delta(G)\ge v(G)/2+c$ implies $\mathrm{rc}(G)\le2$, and this is a compelling problem in itself. 
Perhaps surprisingly, letting~$c$ be a small positive constant already allows us to handle a significantly larger class of graphs than those considered in~\cref{thm:Dirac-K_nn,thm:Dirac-2K_n}.
Note that, in the next theorem, the minimum degree condition is replaced by a lower bound on the sum of the degrees of non-adjacent vertices.
This is a so-called {\it Ore-type} condition, after Ore's generalization of Dirac's theorem~\cite{Ore}. 
In light of \cref{remark:non-adjcanet}, Ore-type conditions are natural in our setting.

\begin{thm}\label{thm:super-Dirac-general}
Let $C\geq 4$. 
Let~$G$ be an $n$-vertex graph such that  $\deg(u) +\deg(v)\geq n-1+C$ for every non-adjacent pair $u,v\in V(G)$, and the vertices can be partitioned into sets $V(G)=A\sqcup B$ such that every non-adjacent pair of vertices $x,y$ on the same side of the bipartition has at least~$n^{8/(2C+1)}$ common neighbors.
If~$n$ is sufficiently large, then~$G$ admits an $\mathrm{rc2}$-coloring.
\end{thm}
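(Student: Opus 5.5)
The plan is to use a \emph{biased} random $2$-edge-coloring adapted to the partition $V(G)=A\sqcup B$. The point is that non-adjacent pairs on the same side have many common neighbors, while pairs on opposite sides might have very few. By \cref{remark:non-adjcanet}, it suffices to connect every non-adjacent pair by a path of length two whose edges have opposite colors.

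\textbf{Cross pairs and the deterministic skeleton.} Let $u\in A$ and $v\in B$ be non-adjacent. Their neighborhoods lie in $V(G)\setminus\{u,v\}$, which has $n-2$ vertices. The Ore condition therefore gives
\[|N(u)\cap N(v)|\ge \deg(u)+\deg(v)-(n-2)\ge C+1.\]
For every common neighbor $w$, exactly one of the edges $uw$ and $wv$ lies inside a part and the other is a cross edge. So coloring all edges inside $A$ and inside $B$ red and all $A$--$B$ edges blue would make every cross pair rainbow connected through \emph{any} common neighbor. The drawback is that same-side pairs then fail completely: both edges of a path $xwy$ with $x,y$ on the same side are inside edges or both are cross edges.

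\textbf{Perturbation.} I would randomize this skeleton with a small parameter $p=n^{-4/(2C+1)}$. Independently, each inside edge is red with probability $1-p$ and blue with probability $p$. Each cross edge is blue with probability $1-p$ and red with probability $p$.

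For a non-adjacent cross pair $u,v$, the paths $uwv$ through distinct common neighbors $w$ are edge-disjoint. Each such path is good with probability $(1-p)^2\ge 1-2p$. Hence the pair fails with probability at most $(2p)^{C+1}=O\big(n^{-(4C+4)/(2C+1)}\big)$. Since $(4C+4)/(2C+1)>2$, this is $o(n^{-2})$.

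For a non-adjacent same-side pair $x,y$, let $w$ be one of at least $k=n^{8/(2C+1)}$ common neighbors. The edges $xw$ and $wy$ are of the same type, so the path $xwy$ is good with probability $2p(1-p)\ge p$. These events are again independent over $w$. So the pair fails with probability at most $(1-p)^k\le \exp(-pk)=\exp\big(-n^{4/(2C+1)}\big)$, which is also $o(n^{-2})$.

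\textbf{Conclusion and main obstacle.} A union bound over at most $n^2$ non-adjacent pairs shows that, for $n$ large, with positive probability every non-adjacent pair is connected by a rainbow path of length two. Hence an rc2-coloring exists.

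The main obstacle is choosing the exponent of $p$ so that both estimates hold simultaneously. Cross pairs need $p^{C+1}\ll n^{-2}$, while same-side pairs need $pk\gg\log n$. The choice $p=n^{-4/(2C+1)}$ balances these against the hypothesis $k\ge n^{8/(2C+1)}$. The assumption $C\ge4$ is only needed to keep the constants comfortable, for instance so that $p\le 1/2$ and the lower-order terms are absorbed for large $n$.
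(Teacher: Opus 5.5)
Your proof is correct, and it takes a genuinely different, and more efficient, route than the paper.

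\textbf{The paper's route.} The paper splits $E(G)$ randomly into $E_1$ (probability $p$) and $E_2$. It colors $E_2$ deterministically from a bipartition $U,W$ of the bad-pairs graph $B(G,t)$: cross edges red, inside edges blue. It colors $E_1$ via \cref{lem:random_coloring}. So a same-side path $xwy$ is only used when \emph{both} of its edges land in $E_1$, which happens with probability $p^2$. This forces $p=\sqrt{30\log_2 n/t}$. The union bound over bad pairs, $n^2(2p)^{C+1}=(120\log_2 n)^{\frac{C+1}{2}}n^{-2/(2C+1)}$, then closes only because of the particular exponent $8/(2C+1)$ together with $C\ge 4$.

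\textbf{Your route.} Your one-shot biased coloring is essentially the paper's two stages merged. The difference is that a same-side path succeeds as soon as exactly one of its two edges deviates from the skeleton, which has probability $2p(1-p)\approx 2p$ rather than $p^2$. As a result you need neither \cref{lem:random_coloring} nor \cref{lem:technical} nor the bad-pairs formalism, because the given partition $A\sqcup B$ serves directly as the skeleton.

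You also have a lot of slack. Taking $p=3\ln n/t$ already gives failure probability at most $n^{-3}$ for each same-side pair. Cross pairs then survive the union bound whenever $t\gg n^{2/(C+1)}\log n$. So your method proves the statement with the threshold $n^{8/(2C+1)}$ replaced by roughly $n^{2/(C+1)+\eps}$, for example $n^{2/5+\eps}$ instead of $n^{8/9}$ when $C=4$.

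\textbf{Your remark on $C\ge 4$.} Your closing remark is slightly off: your argument never uses $C\ge 4$. The inequality $(4C+4)/(2C+1)>2$ holds for every $C\ge 0$, and $p\le 1/2$ holds for large $n$ for any fixed $C$. In the paper, by contrast, $C\ge4$ is genuinely needed because of the $p^2$ loss.

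\textbf{What the paper's approach buys.} Its modular structure is reusable. \cref{lem:technical} and the bad-pairs graph are applied again in the proof of \cref{thm:almost}. There $B(G,t)$ is only nearly bipartite and no partition $A\sqcup B$ is given in advance, so the bipartition has to be extracted from $B(G,t)$ itself.
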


The next corollary, which follows immediately from \cref{thm:super-Dirac-general} with~$C=5$, highlights how a slightly stronger minimum degree condition allows us to considerably relax the other assumptions of \cref{thm:Dirac-K_nn,thm:Dirac-2K_n}.

\begin{cor}\label{cor:super-Dirac}
Let~$G$ be a graph on~$2n$ vertices with~$\delta(G)\ge n+2$ and with a partition $V(G)=A\sqcup B$ such that $|A|=|B|=n$.
If~$n$ is sufficiently large, then  $G$ admits an $\mathrm{rc2}$-coloring in each of the following cases: 
\begin{enumerate}[label={\rm(\alph*)}]
    \item  Every vertex has degree at least $n/2 + (1/2)\cdot(2n)^{8/11}$ to the opposite side. \hfill [Close to $K_{n,n}$]\label{super-Dirac-close-to-Kn,n-B}
    \item Every vertex has degree at least $n/2 + (1/2)\cdot(2n)^{8/11}$ to the same side. \hfill [Close to $2K_n$]\label{super-Dirac-close-to-2Kn-B}
\end{enumerate}
\end{cor}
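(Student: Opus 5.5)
We apply \cref{thm:super-Dirac-general} with $C=5$ to the graph $G$ on $N:=2n$ vertices, using the given partition $V(G)=A\sqcup B$. Two hypotheses need checking: the Ore-type degree condition and the common-neighbor condition for non-adjacent pairs on the same side.

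\textbf{Degree condition.} Since $\delta(G)\ge n+2$, every non-adjacent pair $u,v$ satisfies
\[
\deg(u)+\deg(v)\ge 2n+4=N+4=N-1+C .
\]
This settles the first hypothesis.

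\textbf{Common neighbors.} With $C=5$ the required number of common neighbors is $N^{8/(2C+1)}=(2n)^{8/11}$. We only need this for non-adjacent $x,y$ lying in the same part, and we use inclusion--exclusion inside a single part.

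In case \ref{super-Dirac-close-to-Kn,n-B}, take $x,y\in A$. Each has at least $n/2+(1/2)(2n)^{8/11}$ neighbors in $B$, and $|B|=n$. Hence
\[
|N(x)\cap N(y)\cap B|\ \ge\ 2\Bigl(\tfrac n2+\tfrac12(2n)^{8/11}\Bigr)-n=(2n)^{8/11}.
\]
The same computation works with $A$ and $B$ swapped.

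In case \ref{super-Dirac-close-to-2Kn-B}, take $x,y\in A$ again. Each has at least $n/2+(1/2)(2n)^{8/11}$ neighbors in $A$. Neither $x$ nor $y$ is its own neighbor, and $x\notin N(y)$ because the pair is non-adjacent. So both neighborhoods lie in $A\setminus\{x,y\}$, a set of size $n-2$. This gives at least $(2n)^{8/11}+2$ common neighbors, which is more than enough.

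\textbf{Conclusion.} Both hypotheses of \cref{thm:super-Dirac-general} hold with $C=5\ge4$. Since $N=2n$ is large whenever $n$ is, that theorem gives an rc$2$-coloring of $G$. The argument is a direct verification, and the only place needing care is the counting in case \ref{super-Dirac-close-to-2Kn-B}, which is handled above.
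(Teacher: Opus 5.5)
Your proposal is correct and matches the paper's approach: the paper states that the corollary follows immediately from \cref{thm:super-Dirac-general} with $C=5$. Your checks are exactly the ones needed, namely the Ore-type bound $2n+4=2n-1+5$ and the same-side common-neighbor bound $(2n)^{8/11}$ in both cases.
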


Another natural question is whether every $n$-vertex Dirac graph admits a $2$-edge-coloring such that all but $o(n^2)$ pairs of vertices are connected by a rainbow path.
We provide an affirmative answer to this problem.
In fact, we prove this is the case for graphs slightly below Dirac's minimum degree condition.

\begin{thm}\label{thm:almost}
    Let~$G$ be a graph on~$n$ vertices with $\delta(G)\ge(n-1)/2$.
    There is a $2$-edge-coloring of~$G$ such that all but at most~$o(n^2)$ vertex pairs of~$G$ are connected by a rainbow path.    
\end{thm}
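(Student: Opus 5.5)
We plan to use a random $2$-edge-coloring of $G$, together with a counting argument showing that almost every non-adjacent pair has many common neighbors. By \cref{remark:non-adjcanet}, a pair $\{u,v\}$ of non-adjacent vertices is rainbow connected as soon as some common neighbor $w$ has $uw$ and $wv$ of opposite colors. If $u,v$ have $d$ common neighbors, the $d$ two-edge paths $uwv$ are edge-disjoint. So under a uniformly random coloring (each edge red or blue independently with probability $1/2$), the pair fails with probability exactly $2^{-d}$. It therefore suffices to show that all but $o(n^2)$ non-adjacent pairs have $\omega(1)$ common neighbors. Then the expected number of bad pairs is at most $o(n^2)+\binom n2 2^{-\omega(1)}=o(n^2)$, and some coloring achieves this.

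The key step is the following lemma: for every fixed $K$, the number of non-adjacent pairs with fewer than $K$ common neighbors is $o(n^2)$. For a non-adjacent pair we have $|N(u)\cup N(v)|\le n-2$ and $|N(u)|+|N(v)|\ge n-1$, so they share at least one neighbor. Suppose, aiming for a contradiction, that there are $\eps n^2$ "bad" non-adjacent pairs with codegree below $K$. Consider the auxiliary graph $F$ of bad pairs; it has $\eps n^2$ edges. A bad pair $u,v$ has $|N(u)\cup N(v)|\ge n-1-K$, so $N(u)$ and $N(v)$ nearly partition $V\setminus\{u,v\}$ and each has size about $n/2$ up to $K$. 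We then plan to exploit triangles or short structures in $F$. Counting gives $F$ a vertex of $F$-degree $\ge 2\eps n$. Take such a $u$ and its set $X$ of bad partners; each $x\in X$ has neighborhood almost equal to the complement $\overline{N(u)}$, with error $O(K)$ plus degree slack.

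Since the minimum degree is only $(n-1)/2$, neighborhoods have sizes in $[(n-1)/2,\,(n-1)/2+O(K)]$ in these situations. Hence vertices in $X$ have almost identical neighborhoods $\approx \overline{N(u)}=:W$ with $|W|\approx n/2$. Two vertices $x,x'\in X$ share about $n/2$ neighbors, so they cannot form a bad pair, and in fact they are mostly adjacent or have a large codegree. Iterating this, we get a structure in which $V$ splits into parts $N(u)$ and $W$, with the bad partners of typical vertices determined. The plan is to show this forces the bad pairs to be between two sets $P,Q$ with $N(p)\approx V\setminus N(q)$, so that $G$ is essentially bipartite-like or two-clique-like. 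In that near-extremal structure one can then check directly that bad pairs have codegree $\ge1$ only in a sparse way, giving a contradiction with $\eps n^2$.

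I expect this step to be the main obstacle, since codegree exactly $1$ can occur for many pairs in some constructions, such as the circulant example in the introduction. If the counting lemma fails, the fallback is a structured coloring. Apply the trichotomy of \cite[Lemma~26]{klo}. In the expander case, almost all pairs have linear codegree and the random coloring works. In the near-$K_{n/2,n/2}$ and near-$2K_{n/2}$ cases, we would use a designed coloring; for instance, in the near-$2K_{n/2}$ case we color each dense side randomly. Pairs that are not rainbow connected are then confined to $o(n^2)$ exceptional vertices or edges.
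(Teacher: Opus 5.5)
There is a genuine gap: your key lemma is false. It fails exactly in the near-extremal structure where you planned to ``check directly'' that bad pairs are sparse.

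\textbf{The counterexample.} Let $n=2m$ and let $G$ consist of two disjoint cliques on $A$ and $B$, with $|A|=|B|=m$, plus a perfect matching $M$ between them. Then $\delta(G)=m=n/2$. Every pair $a\in A$, $b\in B$ with $ab\notin M$ is non-adjacent and has exactly two common neighbors, namely the $M$-partners of $a$ and of $b$. So $m(m-1)\approx n^2/4$ non-adjacent pairs have codegree $2$. Under a uniformly random coloring each of them fails with probability $1/4$, giving about $n^2/16$ failures in expectation. Replacing $M$ by a $k$-regular bipartite graph gives $\delta(G)=n/2+k-1$ and still $\Theta(n^2)$ pairs of codegree $2k$. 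So no counting lemma of this type holds even above the Dirac threshold. The circulant example you cite is not the real obstruction: it has only $O(Kn)$ pairs of codegree below $K$.

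\textbf{The fallback.} The same example breaks your fallback as stated. Suppose the two dense sides are colored at random. Each cross pair still fails with probability $1/4$, however the matching edges are colored, because each of its two paths contains exactly one clique edge. Beyond that, the fallback is only a sketch. The claim that almost all pairs in a robust expander have linear codegree is not justified. Also, $\eps$-closeness for a fixed $\eps$ allows up to $\eps n^2$ exceptional edges, not $o(n^2)$, unless you track the parameter dependencies carefully.

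\textbf{The missing idea.} Low-codegree pairs can number $\Theta(n^2)$, so they must be rainbow connected deterministically, using structure rather than randomness. In the example, the right coloring is ``matching edges red, clique edges blue'': every two-edge path from $A$ to $B$ contains exactly one cross edge.

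The paper generalizes this as follows. With $t=(\log_2 n)^2$, \cref{lemma:bad-graph-girth} shows that the $t$-bad-pairs graph has odd girth at least $n/(2t+1)$. By the Andr\'asfai--Erd\H{o}s--S\'os theorem, it therefore becomes bipartite, with classes $U,W$, after deleting only $o(n^2)$ edges. Coloring the edges between $U$ and $W$ red and all others blue makes every two-edge path between $U$ and $W$ rainbow.

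This coloring, however, destroys every two-edge rainbow path between two vertices on the same side. The paper therefore first splits $E(G)$ at random into a sparse set $E_1$ (density $p=\sqrt{30/\log_2 n}$) and the remainder $E_2$. By \cref{lem:technical}, every $t$-good pair keeps at least $2\log_2 n$ common neighbors in $E_1$, which is then colored randomly via \cref{lem:random_coloring}. All but $o(n^2)$ of the $t$-bad pairs keep a common neighbor in $E_2$, which receives the bipartition coloring.
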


Note that the minimum degree condition $\delta(G)\ge(n-1)/2$ in \cref{thm:almost} is best possible.
Indeed, the $n$-vertex graph consisting of two disjoint cliques of size $\floor{n/2}$ and $\ceil{n/2}$ has minimum degree $\floor{n/2}-1$ and is not even connected.

\paragraph{\texorpdfstring{Organization of the paper.}{Organization of the paper}}

In \cref{sec:random}, we prove a slight strengthening of \cref{prop:random-coloring} (\cref{lem:random_coloring}), which is used in later proofs.
Sections~\ref{sec:K_nn-modifications} and \ref{sec:2K_n-modifications} are devoted to the proofs of \cref{thm:Dirac-K_nn,thm:Dirac-2K_n}, respectively.
In \cref{sec:bad-pairs}, we describe a general framework to attack \cref{q:dirac}.
In particular, we state and prove a technical result (\cref{lem:technical}), which is used to prove \cref{thm:super-Dirac-general,thm:almost}.
Finally, in \cref{sec:conclusion}, we raise various open problems and discuss future research directions.

\paragraph{\texorpdfstring{Notation.}{Notation}} 
We use standard notation for graphs and sets. Let~$G$ be a graph.
For each vertex~$v\in V(G)$, we write~$N_G(v)$ for the set of neighbors of~$v$ in~$G$.
We write $\deg_G(v)=\size{N_G(v)}$ for the degree of~$v$ in~$G$.
For vertex sets~$A,B\subseteq V(G)$, we write~$G[A]$ for the subgraph of~$G$ induced by~$A$, and~$G[A,B]$ for the subgraph induced by the edges with one endpoint in $A$ and one endpoint in~$B$. 
We write~$E_G(A)$ and~$E_G(A,B)$ for the edge sets of~$G[A]$ and $G[A,B]$, respectively.
Set $e(G):=\size{E(G)}$, $e_G(A):=\size{E_G(A))}$, and $e_G(A,B):=\size{E_G(A,B))}$.
We also write $\Delta(G)$, $\Delta_G(A)$, and $\Delta_G(A,B)$ for the maximum degree of~$G$, $G[A]$, and~$G[A,B]$ respectively.
In coloring arguments, we often identify $G$ with its edge set.
When~$G$ is clear from the context, we omit the subscript $G$ (e.g., we write $N(v)$ instead of~$N_G(v)$).
Furthermore, given a set of vertices~$U\subseteq V(G)$, we write $N_U(v)$ for the set of neighbors of~$v$ in~$U$, and $\deg_U(v)$ for the number of neighbors of~$v$ in~$U$, that is, $\deg_U(v)=\size{N_U(v)}$.
Given a set of vertices $S\subseteq V(G)$, we write $G-S$ for the  graph obtained by removing the vertices in~$S$ from~$G$.
Similarly, given a set of edges $T\subseteq E(G)$, we write $G-T$ for the  graph obtained by removing the edges in~$T$ from~$G$.

Given two sets~$X$ and~$Y$, we write $X-Y$ for the set of elements in~$X$ but not in~$Y$.
We write $X\Delta Y$ for the symmetric difference of~$X$ and~$Y$ i.e., the set of elements lying in the union of~$X$ and~$Y$ but not in their intersection.

\section{Random coloring}\label{sec:random}

In this section, we prove a slight strengthening of \cref{prop:random-coloring}, which we use repeatedly throughout our arguments. 
The proof uses a standard probabilistic idea.

\begin{lem}\label{lem:random_coloring}
    Every $n$-vertex graph $H$ has a $2$-edge-coloring such that every pair of vertices $u,v\in V(H)$ satisfying $\size{N(u)\cap N(v)} \ge 2\log_2 n$ is connected by a rainbow path. 
\end{lem}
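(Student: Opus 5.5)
We color each edge of $H$ red or blue independently and uniformly at random, and show that with positive probability every pair $u,v$ with $\size{N(u)\cap N(v)}\ge 2\log_2 n$ is joined by a rainbow path. Pairs that are adjacent need no work, since the edge $uv$ is itself a rainbow path (as in \cref{remark:non-adjcanet}). So it suffices to handle pairs joined by a rainbow path of length two through a common neighbor.

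Fix such a pair $u,v$ and a common neighbor $w\in N(u)\cap N(v)$. The path $uwv$ is rainbow exactly when $uw$ and $wv$ receive different colors, which happens with probability $1/2$. For distinct common neighbors $w\ne w'$ the edge pairs $\{uw,wv\}$ and $\{uw',w'v\}$ are disjoint. Hence these events are mutually independent, and the probability that no path $uwv$ is rainbow is at most
\[
2^{-\size{N(u)\cap N(v)}}\le 2^{-2\log_2 n}=n^{-2}.
\]

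There are at most $\binom{n}{2}<n^2/2$ pairs $\{u,v\}$, so a union bound gives that the probability that some qualifying pair fails is less than $1/2<1$. Therefore some $2$-edge-coloring has the required property.

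There is no real obstacle here. The only points to check are the independence across distinct middle vertices, which holds because the two-edge paths are edge-disjoint, and that the constant $2$ in $2\log_2 n$ beats the $\binom{n}{2}$ union bound, which it does.
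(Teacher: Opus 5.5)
Your proposal is correct and follows essentially the same argument as the paper: a uniform random 2-coloring, the bound $2^{-\size{N(u)\cap N(v)}}\le n^{-2}$ on the probability that every path $uwv$ is monochromatic, and a union bound over at most $\binom{n}{2}$ pairs. You also state explicitly why the events for different middle vertices are independent (the two-edge paths are edge-disjoint), which the paper uses without comment.
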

\begin{proof}
    Color each edge independently and uniformly at random with the colors red and blue. Let $u,v\in V(H)$ be  vertices satisfying $\size{N(u)\cap N(v)} \ge 2\log_2 n$. For a given $w\in N(u)\cap N(v)$, we have $\mathbb P[uwv \text{ is monochromatic}] = \frac12$, and so
    \begin{align*}
        \mathbb P\left[\begin{array}{l}
        uwv \text{ is monochromatic} \\
        \text{for all } w\in N(u)\cap N(v)
        \end{array}\right]  = \parens*{\frac12}^{\size{N(u)\cap N(v)}}\le\parens*{\frac12}^{2\log_2 n}=n^{-2}.    
    \end{align*}
    There are at most $\binom{n}{2}$ such pairs $(u,v)$.
    Since $\binom{n}{2}\cdot n^{-2} <1$, by the union bound every pair of vertices $u,v\in V(H)$ satisfying $\size{N(u)\cap N(v)} \ge 2\log_2 n$ are connected by a rainbow path with positive probability.
    This concludes the proof.
\end{proof}

\section{\texorpdfstring{Dirac graphs close to $K_{n,n}$: proof of \cref{thm:Dirac-K_nn}}{Dirac graphs close to K_{n,n}: proof of Theorem 1.6}}\label{sec:K_nn-modifications}
    Our strategy is to first pre-color some edges to ensure that every pair $(a,b)$ of non-adjacent vertices $a\in A$ and $b\in B$ is connected by a rainbow path.
    We achieve this by coloring the edges in~$G[A]$ and~$G[B]$ red and a small set of edges $F\subseteq E(A,B)$ blue.
    Coloring the remaining edges $E(A,B)-F$ with red and blue, uniformly and independently at random, will take care of non-adjacent pairs within the same part.

    We now define the set~$F\subseteq E(A,B)$. 
    For every pair~$(a,b)$ of non-adjacent vertices $a\in A$ and~$b\in B$, pick an arbitrary common neighbor~$w_{ab}$ of~$a$ and~$b$.
    Note that~$w_{ab}$ exists since $\delta(G)\ge n$ and $v(G)=2n$.
    Furthermore, let $c_{ab}:=a$ if $w_{ab}\in B$ or $c_{ab}:=b$ if $w_{ab}\in A$.
    In particular, the edge~$w_{ab}c_{ab}$ is the edge of the path $aw_{ab}b$ that is incident to both~$A$ and~$B$.
    We let
    $$F:=\{w_{ab}c_{ab}:a\in A,\, b\in B,\, ab\notin E(G)\}\subseteq E(A,B).$$

    Let $d:=\max\{\Delta(A),\Delta(B)\}$; recall that $d\le \sqrt{n}/3$ by assumption.
    Note that every vertex has at most~$d$ neighbors in its part.
    Also, since $\delta(G)\ge n$ and $\size{A}=\size{B}=n$, every vertex has at most $d$ non-neighbors in the opposite part.

    \begin{clm}\label{claim:phaseI}
        Every vertex is incident to at most~$2d^2$ edges in~$F$.    
    \end{clm}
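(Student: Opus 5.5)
Fix a vertex $v$; by symmetry assume $v\in A$. The plan is to count the edges of $F$ at $v$ by splitting them according to whether $v$ plays the role of $c_{ab}$ or of $w_{ab}$. By construction every edge of $F$ has the form $w_{ab}c_{ab}$, where $ab\notin E(G)$, $a\in A$ and $b\in B$. So an edge of $F$ at $v$ arises from some non-adjacent pair $(a,b)$ in one of two ways: either $v=c_{ab}$, or $v=w_{ab}$. We bound the number of pairs of each type.

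\emph{Case $v=c_{ab}$.} Since $v\in A$, this forces $v=a$ and $w_{ab}\in B$. The pair is then determined by $b$, which must be a non-neighbor of $v$ in $B$. As noted before the claim, $v$ has at most $d$ non-neighbors in the opposite part. Hence at most $d$ pairs are of this type.

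\emph{Case $v=w_{ab}$.} Here $v\in A$ forces $c_{ab}=b$. Both $a$ and $v$ lie in $A$ and $a$ is adjacent to $v=w_{ab}$, so $a$ is one of the at most $d$ neighbors of $v$ inside $A$. Given $a$, the vertex $b$ is a non-neighbor of $a$ in $B$, so there are at most $d$ choices for $b$. This gives at most $d^2$ pairs of this type.

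Each edge of $F$ incident to $v$ comes from at least one such pair. Therefore the number of edges of $F$ at $v$ is at most $d+d^2\le 2d^2$, using $d\ge1$. If $d=0$, then $G[A,B]$ is complete bipartite, so there are no non-adjacent cross pairs and $F=\emptyset$. The only point needing care is to make sure that every edge of $F$ incident to $v$ really is attributed to a pair of one of the two types; this holds because $w_{ab}c_{ab}$ has exactly the two endpoints $w_{ab}$ and $c_{ab}$. The rest is routine counting.
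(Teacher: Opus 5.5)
Your proof is correct and follows essentially the same argument as the paper: split according to whether $v=c_{ab}$ (at most $d$ pairs, via the non-neighbors of $v$ across) or $v=w_{ab}$ (at most $d^2$ pairs, via the neighbors of $v$ in its own part and their non-neighbors across), giving $d+d^2\le 2d^2$. Your separate treatment of $d=0$ is harmless but unnecessary, since $d\le d^2$ holds for every non-negative integer $d$.
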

    \begin{proofclaim}
        Fix an arbitrary vertex $v\in V(G)$.
        We upper bound the number of pairs~$(a,b)$ of non-adjacent vertices $a\in A$ and $b\in B$ such that the edge $w_{ab}c_{ab}$ contains~$v$.
        
        First, we bound pairs for which~$v=w_{ab}$.
        In this case, both~$a$ and~$b$ must be adjacent to~$v$.
        Note that~$v$ has at most~$d$ neighbors in its part.
        In turn, each such neighbor has at most~$d$ non-neighbors in the opposite part.
        Therefore, there are at most~$d^2$ choices for~$a$ and~$b$.
        
        Next, we bound pairs for which~$v=c_{ab}$. 
        We must have either $a=v$, if~$v\in A$, or $b=v$, if~$v\in B$. 
        Then we have at most~$d$ choices for the other vertex of the pair, since~$v$ has at most~$d$ non-neighbors in the opposite part.
        
        We conclude that~$v$ is incident to at most~$d^2+d\le2d^2$ edges in~$F$, as required.
    \end{proofclaim}

    We now pre-color all edges of $F$ blue and all edges of $G[A]$ and $G[B]$ red. Notice that, for every pair~$(a,b)$ of non-adjacent vertices $a\in A$ and $b\in B$, the vertices~$a$ and~$b$ are connected by a rainbow path, namely $aw_{ab}b$. 
    It remains to handle non-adjacent pairs consisting of either two vertices from~$A$ or two vertices from~$B$. 
    
    Let~$H$ be the graph obtained from~$G[A,B]$ by removing the edges in~$F$.
    Since every vertex in~$G$ has at least $n-d$ neighbors in the opposite part, \cref{claim:phaseI} implies $\delta(H)\ge n-d-2d^2$.
    In particular, the assumption $d\le \sqrt{n}/3$ implies $\delta(H)\ge 2n/3$.   
    For every pair of vertices $u,v\in A$ (or $u,v\in B$), we have
    \begin{align*}
        \size{N_H(u)\cap N_H(v)}&\geq \deg_H(u) + \deg_H(v) -n\geq  4n/3 -n= n/3\ge2\log_2(2n),
    \end{align*}
    where the last inequality holds for, say, $n\ge 40$.
    Hence, by \cref{lem:random_coloring}, there exists a $2$-edge-coloring of~$H$ such that every pair of vertices in the same part is connected by a rainbow path.
    Thus, the overall $2$-edge-coloring of~$G$ is rainbow connected, as required.\qed

\section{\texorpdfstring{Dirac graphs close to $2K_n$: proof of \cref{thm:Dirac-2K_n}}{Dirac graphs close to 2K_n: proof of Theorem 1.7}}\label{sec:2K_n-modifications}

As in the proof of \cref{thm:Dirac-K_nn}, we construct an rc$2$-coloring with a mix of deterministic and random strategies.
This turns out to be considerably more involved in the present setting.

\smallskip

First, note that every two non-adjacent vertices in~$G$ have at least two common neighbors since $\delta(G)\ge n$ and $v(G)=2n$.
We say a pair of vertices $(a,b)$ with  $a\in A$ and~$b\in B$ is {\it flexible} if they are non-adjacent and they have at least one common neighbor in each of~$A$ and~$B$.
We say a pair of vertices $a\in A$ and~$b\in B$ is {\it inflexible} if they are non-adjacent and all their common neighbors lie in the same part, i.e., either $N(a)\cap N(b)\subseteq A$ or $N(a)\cap N(b)\subseteq B$.

It suffices to show that there exist two sets of edges~$E_f$ and~$E_i$, not necessarily disjoint, from~$E(A)\cup E(B)$ with the following three properties:
\begin{enumerate}[label=(\roman*)]
    \item Every flexible pair has a common neighbor within the edge set $E(A,B)\cup E_f$.\label{case:a}
    \item Every inflexible pair has a common neighbor within the edge set $E(A,B)\cup E_i$.\label{case:b}
    \item Every pair of vertices in~$A$ has at least~$2\log_2{n}$ common neighbors in~$G[A]-(E_f\cup E_i)$, and
    every pair of vertices in~$B$ has at least~$2\log_2{n}$ common neighbors in~$G[B]-(E_f\cup E_i)$.\label{case:c}
\end{enumerate}

Indeed, given such sets~$E_i$ and~$E_f$, we color all edges in~$G[A,B]$ red, all edges in~$E_i\cup E_f$ blue, and all remaining edges with red and blue uniformly and independently at random.
From property (i), it follows that every flexible pair is connected by a rainbow path, using one red edge from~$E[A,B]$ and one blue edge from~$E_f$. 
Similarly, from property (ii), it follows that every inflexible pair is connected by a rainbow path, using one red edge from~$E[A,B]$ and one blue edge from~$E_i$. 
Finally, by \cref{lem:random_coloring}, with high probability every pair of vertices in~$A$ is connected by a rainbow path in $G[A]-(E_f\cup E_i)$, and every pair of vertices in~$B$ is connected by a rainbow path in $G[B]-(E_f\cup E_i)$.
In the rest of the proof, we show that such sets~$E_f$ and~$E_i$ exist.

\smallskip

Let $d:=\Delta(A,B)$, and so $d\le8^{-1}\log_2{n}$ by assumption.
Note that every vertex has at most~$d$ neighbors in the opposite part.
Also, since $\delta(G)\ge n$ and $\size{A}=\size{B}=n$, every vertex has at most~$d$ non-neighbors in its part.

\medskip

\noindent{\bf Constructing~$E_f$: flexible pairs.}
For every flexible pair $(a, b)\in A\times B$, pick arbitrary vertices~$u_{ab}\in N_A(a)\cap N_A(b)$ and $v_{ab}\in N_B(a)\cap N_B(b)$. 
These exist by the definition of flexibility.
In particular, $au_{ab}$ and~$bv_{ab}$ are edges in~$G[A]$ and~$G[B]$, respectively.
Note that a vertex might be picked multiple times when considering different flexible pairs~$(a,b)$.

Next, we show that an edge~$xy$ of~$G[A]$ cannot correspond to~$au_{ab}$ for many flexible pairs~$(a,b)$.
Indeed, if~$a=x$ and~$u_{ab}=y$, then there are at most $\deg_B(y)$ choices for~$b$, since~$y$ and~$b$ must be adjacent;
similarly, if~$a=y$ and~$u_{ab}=x$, then there are at most $\deg_B(x)$ choices for~$b$, since~$x$ and~$b$ must be adjacent.
We conclude that
\begin{align}\label{eq:repeated_edges_1}
|\set{(a,b)\in A\times B:ab\text{ is flexible and } xy=au_{ab}}|\le \deg_B(x)+\deg_B(y)\le 2d.
\end{align}
Similarly, if~$xy$ is an edge of~$G[B]$, then 
\begin{align}\label{eq:repeated_edges_2}
|\set{(a,b)\in A\times B:ab\text{ is flexible and } xy=bv_{ab}}|\le \deg_A(x)+\deg_A(y)\le2d.
\end{align}
We now construct the set~$E_f$ semi-randomly, starting from the empty set and adding edges.
For every flexible pair $(a,b)\in A\times B$, we flip a fair coin independently.
If it lands heads, we add~$au_{ab}$ to~$E_f$; if it lands tails, we add~$bv_{ab}$ to~$E_f$.
In both cases, there will be a path of length two between~$a$ and~$b$ within the edge set $E(A,B)\cup E_f$; this is precisely property~\ref{case:a}.
Note that an edge in~$E(G)$ may be chosen multiple times to be assigned to~$E_f$, namely when it equals~$au_{ab}$ (or~$bv_{ab}$) for different pairs~$(a,b)$.

The next claim is needed to ensure property~\ref{case:c} holds.

\begin{clm}\label{claim:E_f}
    With positive probability, every pair of vertices in~$A$ has at least~$\sqrt{n}/4$ common neighbors in $G[A]-E_f$, and every pair of vertices in~$B$ has at least~$\sqrt{n}/4$ common neighbors in~$G[B]-E_f$.
\end{clm}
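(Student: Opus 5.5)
Fix two vertices $x,y\in A$; the case of $B$ is symmetric. Every vertex of $A$ has at most $d$ non-neighbours in $A$ and at most $d$ neighbours in $B$. So $x$ and $y$ have at least $n-2-2d$ common neighbours in $A$, far more than $\sqrt n/4$.

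A common neighbour $z\in N_A(x)\cap N_A(y)$ is lost only if $xz\in E_f$ or $yz\in E_f$. So it suffices to show that, with positive probability, for every $x\in A$ at most roughly $n/3$ edges of $G[A]$ at $x$ lie in $E_f$. Then each of $x,y$ loses at most $n/3$ common neighbours, leaving at least $n-2-2d-2n/3\ge \sqrt n/4$.

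The key structural point is that the edges at $x$ that can ever enter $E_f$ come from a small set of pairs. An edge $xz\in E(A)$ can only enter $E_f$ as $au_{ab}$ with $\{a,u_{ab}\}=\{x,z\}$. If $a=x$, the pair $(x,b)$ needs $b$ to be a non-neighbour of $x$ in $B$; each such $b$ contributes at most one edge at $x$. If $u_{ab}=x$, then $b\in N_B(x)$, so there are at most $d$ choices of $b$, and $a$ is then a non-neighbour of $b$ in $A$. Each such $b$ has at most $d$ neighbours in $A$, hence at least $n-d$ non-neighbours in $A$, so this count is not small. It is bounded instead by $\sum_{b\in N_B(x)}|A\setminus N(b)|\le dn$, which is too weak.

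So a worst-case count does not suffice, and I would use the randomness. Each edge $xz$ with $x=u_{ab}$ enters $E_f$ with probability $1/2$ per relevant pair. By~\eqref{eq:repeated_edges_1}, each edge corresponds to at most $2d$ pairs, so it enters $E_f$ with probability at most $1-2^{-2d}$. That is still close to $1$, which is unhelpful.

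The better tool is the coin flips on the pairs $(a,b)$ with $b\in N_B(x)$. Each contributes to at most one edge at $x$, so the number of edges at $x$ from the case $u_{ab}=x$ is stochastically at most $\mathrm{Bin}(m,1/2)$, where $m\le dn$ counts relevant pairs.

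Instead I would argue directly per common neighbour. For fixed $x,y$, let $Z$ be the set of common neighbours $z$. The event ``$z$ survives'' requires both $xz,yz\notin E_f$. The edge $xz$ is involved in at most $2d$ pairs by~\eqref{eq:repeated_edges_1}, and similarly for $yz$, so at most $4d$ coin flips are involved, and $z$ survives with probability at least $2^{-4d}\ge n^{-1/2}$, using $d\le 8^{-1}\log_2 n$. The expected number of survivors is then at least $(n-2-2d)n^{-1/2}\approx\sqrt n$. For concentration, the flips governing different $z$ overlap: a single pair $(a,b)$ affects at most one edge, but the survival of $z$ depends on the flips of pairs where $z$ plays the role of $a$ or $u$. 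Each flip affects at most two $z$'s. I would apply McDiarmid's (bounded differences) inequality, or alternatively choose a subfamily of $z$'s with disjoint flip sets greedily, each conflicting with $O(d)$ others, and use independence plus Chernoff. Getting at least $\sqrt n/2$ survivors with failure probability $e^{-\Omega(\sqrt n/d)}$ then beats the union bound over $n^2$ pairs.

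The main obstacle is this dependency and concentration step: verifying that the survival events of distinct $z$ are sufficiently independent, so that the $\sqrt n/4$ threshold holds simultaneously for all pairs.
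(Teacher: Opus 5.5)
Your per-$z$ computation is right and matches the paper. The coins attached to $xz$ and to $yz$ are distinct, with at most $2d$ each by \eqref{eq:repeated_edges_1}. Hence $\mathbb P[xz,yz\notin E_f]\ge 2^{-4d}\ge n^{-1/2}$, and the expected number of surviving common neighbours is of order $\sqrt n$.

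The gap is the concentration step. You leave it open, and you set it up on a false premise: the flips governing different $z$ do \emph{not} overlap. The coin of a flexible pair $(a,b)$ influences exactly one edge of $G[A]$, namely $au_{ab}$. So whether an edge $e\in E(G[A])$ lies in $E_f$ is determined by the coins attached to $e$ alone, and distinct edges have disjoint sets of coins. Hence the events $\{e\in E_f\}$, $e\in E(G[A])$, are mutually independent. For fixed $x,y$, a coin can matter for at most one $z$ (the one with $au_{ab}\in\{xz,yz\}$, if any), not two as you say. Moreover, the pairs $\{xz,yz\}$ are disjoint for distinct $z$. So the survival indicators are independent, and the ordinary Chernoff bound gives $\mathbb P[P_{xy}\le\sqrt n/4]\le\exp(-\sqrt n/16)\le n^{-2}$. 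A union bound over the $n(n-1)$ same-side pairs then finishes. This independence is exactly what the paper's proof singles out as the crucial point.

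Neither remedy you propose proves the claim with the stated bound.

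\emph{McDiarmid.} In the worst case allowed by \eqref{eq:repeated_edges_1} there are about $4d|Z|$ relevant coins, each with Lipschitz constant $1$. When $d=8^{-1}\log_2 n$ this is of order $n\log_2 n$, while the mean is only of order $\sqrt n$. A deviation $s=\Theta(\sqrt n)$ then gets the bound $\exp(-2s^2/(4d|Z|))=\exp(-O(1/d))$. That is useless, let alone $\le n^{-2}$.

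\emph{Greedy subfamily.} Under your assumption that each $z$ conflicts with $O(d)$ others, you keep only $|Z|/O(d)$ of the $z$'s. This certifies about $\sqrt n/O(d)$ survivors, not the $\sqrt n/2$ you state; your own failure probability $e^{-\Omega(\sqrt n/d)}$ already reflects this loss. Such a weaker count would actually still suffice for the later use in the proof of \cref{thm:Dirac-2K_n}, but it does not give the claim with $\sqrt n/4$.

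The missing ingredient is simply recognizing that there is no dependency to handle.
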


\begin{proofclaim}
Crucially, the events $\{e\in E_f:e\in E(G[A])\}$ are pairwise independent: indeed, whether $e\in E_f$ or $e\notin E_f$ depends only on the coins flipped for the pairs~$(a,b)$ such that $e=au_{ab}$, which are independent from all other coin flips.
Similarly, the events $\{e\in E_f:e\in E(G[B])\}$ are pairwise independent.

For every edge~$e$ in~$G[A]$, we have $\mathbb P[e\notin E_f]=2^{-\ell}$, where~$\ell$ is the number of flexible pairs $(a,b)\in A\times B$ such that $e=au_{ab}$.
By~\eqref{eq:repeated_edges_1}, we conclude that 
$$\mathbb P[e\notin E_f]\ge2^{-2d}.$$
Using~\eqref{eq:repeated_edges_2}, we can prove the same bound holds for an edge in~$e$ in~$G[B]$.
Now, for every pair of vertices~$x,y$ lying on the same side $C\in\{A,B\}$, we have 
\begin{align*}
|N_C(x)\cap N_C(y)|=|N_C(x)|+|N_C(y)|-|N_C(x)\cup N_C(y)|\ge 2(n-d)-n=n-2d\ge \frac{n}{2},
\end{align*}
where the last step holds since $d\le8^{-1}\log_2 n$.
Let~$P_{xy}$ denote the number of paths of the form~$xzy$ such that~$z\in C$ and $xz,zy\notin E_f$.
The probability that a fixed path~$xzy$ does not contain edges from~$E_f$ is
$$\mathbb P[xz\notin E_f \text{ and } zy\notin E_f]=\mathbb P[xz\notin E_f]\cdot \mathbb P[zy\notin E_f]\ge 2^{-4d},$$
where the  equality follows from the independence of the events (as~$xz$ and~$zy$ both lie in~$G[C]$).
In particular, we have 
$$\mathbb E[P_{xy}]\ge|N_C(x)\cap N_C(y)|\cdot 2^{-4d}\ge n\cdot2^{-4d-1}\ge \sqrt{n}/2.$$ 
Since the events $\{xz\notin E_f \text{ and } zy\notin E_f\}$ are pairwise independent (for fixed endpoints~$x$ and~$y$ and varying $z$), we can apply the well-known Chernoff bound to obtain
\begin{align*}
\mathbb P[P_{xy}\le \sqrt{n}/4]&\le \mathbb P[P_{xy}\le 2^{-1}\mathbb E[P_{xy}]]\le\exp(-\mathbb E[P_{xy}]/8)\le\exp(-\sqrt{n}/16)\le n^{-2},
\end{align*}
where the last inequality holds for large~$n$ (say, $n\ge 3\cdot10^5$).
Since there are~$n(n-1)$ possible pairs~$x,y$ (namely,~$\binom{n}{2}$ pairs in~$A$ and $\binom{n}{2}$ pairs in~$B$), the union bound implies that, with positive probability, we have~$P_{xy}\ge \sqrt{n}/4$ for every pair~$x,y$ lying in the same part, as required.  
\end{proofclaim}

\medskip

\noindent{\bf Constructing~$E_i$: inflexible pairs.}
We first show that every vertex is part of few inflexible pairs.
Note that the proof of the following claim (and of \cref{claim:E_i}) closely resembles the analysis in the proof of \cref{claim:phaseI} from \cref{sec:K_nn-modifications}.

\begin{clm}\label{claim:inflexible-pairs}
    Every vertex in~$G$ is part of at most~$2d^2$ inflexible pairs.
\end{clm}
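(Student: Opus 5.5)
We need to show every vertex $v$ lies in at most $2d^2$ inflexible pairs. The plan is to fix $v$, say $v=a\in A$ (the case $v\in B$ is symmetric), and count the vertices $b\in B$ for which $(a,b)$ is inflexible. Any such $b$ is non-adjacent to $a$, and all common neighbors of $a$ and $b$ lie on one side. We split according to which side that is.

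\emph{Case 1: $N(a)\cap N(b)\subseteq A$.} Then $a$ and $b$ have no common neighbor in $B$. We exploit that $b$ misses at most $d$ vertices of its own part $B$. Every neighbor $w\in N_B(a)$ must then be non-adjacent to $b$, so $N_B(a)$ lies inside the non-neighborhood of $b$ in $B$. Hence if $a$ has any neighbor $w\in B$, then $b$ is a non-neighbor of $w$ within $B$, which leaves at most $d$ choices for $b$.

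It remains to rule out $N_B(a)=\emptyset$. This is easy: $\deg(a)\ge n$ while $|A-\{a\}|=n-1$, so $a$ has at least one neighbor in $B$. In fact we may bound crudely and sum over the at most $d$ neighbors $w\in N_B(a)$, each contributing at most $d$ non-neighbors $b\in B$ of $w$. This gives at most $d^2$ such $b$.

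\emph{Case 2: $N(a)\cap N(b)\subseteq B$.} Symmetrically, every neighbor of $b$ in $A$ is a non-neighbor of $a$ in $A$, and $a$ has at most $d$ non-neighbors in $A$. Since $b$ has at least one neighbor in $A$ (by the same degree count), $b$ must be a neighbor of one of the at most $d$ non-neighbors $x\in A$ of $a$. Each such $x$ has at most $d$ neighbors in $B$, so there are at most $d^2$ choices for $b$.

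Adding the two cases gives at most $2d^2$ inflexible pairs containing $v$. No step is a real obstacle. The only point needing care is guaranteeing that each vertex has at least one neighbor in the opposite part, and this follows from $\delta(G)\ge n$ and $|A|=|B|=n$.
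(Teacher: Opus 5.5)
Your proof is correct and is essentially the paper's argument: the same split according to whether $N(a)\cap N(b)\subseteq A$ or $N(a)\cap N(b)\subseteq B$, using a neighbor of $a$ in $B$ to get at most $d$ choices in the first case, and a neighbor of $b$ in $A$ (necessarily a non-neighbor of $a$) to get at most $d^2$ choices in the second. The extra ``crude'' $d^2$ bound in Case~1 is unnecessary but harmless; the paper simply uses $d+d^2\le 2d^2$, which is valid since $d\ge 1$.
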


\begin{proofclaim}
    Fix an arbitrary vertex $a\in V(G)$.
    Without loss of generality, we may assume~$a\in A$.
    Recall that~$a$ forms an inflexible pair with $b\in B$ if and only if~$a$ and~$b$ are non-adjacent and either~$N(a)\cap N(b)\subseteq A$ or $N(a)\cap N(b)\subseteq B$.
    We upper bound the number of possible choices of $b$.
    
    First, we count the possible $b$ such that $N(a)\cap N(b)\subseteq A$.
    Note that~$a$ has a neighbor~$x$ in~$B$, since $\delta(G)\ge n$ and $\size{A\setminus\set{a}}=n-1$.
    In turn, $x$ has at least~$n-d$ neighbors in~$B$.
    These neighbors of~$x$ cannot play the role of~$b$, since they have $x\in B$ as a common neighbor with~$a$.
    Thus, there are at most~$d$ choices for~$b$.
    
    Next, we count those~$b$ for which $N(a)\cap N(b)\subseteq B$.
    Note that any such~$b$ has at least one neighbor $y$ in~$A$, since $\delta(G)\ge n$ and $\size{B\setminus\set{b}}=n-1$.
    In turn, $y$ cannot be a neighbor of~$a$.
    Since~$a$ has at most $d$ non-neighbors in~$A$, there are at most~$d$ choices for~$y$.
    Since~$y$ has at most~$d$ neighbors in~$B$, for each fixed~$y$ there are at most~$d$ choices for~$b$.
    We conclude there are at most~$d^2$ choices for~$b$.    
     
    It follows that there are at most $d+d^2\le 2d^2$ inflexible pairs containing~$a$, as required.
\end{proofclaim}

    For every inflexible pair $a\in A$ and $b\in B$, pick a common neighbor~$w_{ab}$ of~$a$ and~$b$.
    We let~$e_{ab}$ be either the edge~$aw_{ab}$, if $w_{ab}\in A$, or the edge~$bw_{ab}$, if $w_{ab}\in B$.
    In particular, the edge~$e_{ab}$ lies completely in either~$A$ or~$B$.
    
    We let $E_i:=\{e_{ab}:a\in A,\, b\in B,\,\text{ $(a,b)$ is inflexible}\}$. 
    Notice that, for every inflexible pair~$(a,b)\in A\times B$, the vertices~$a$ and~$b$ have a common neighbor within the edge set~$E(A,B)\cup E_i$, namely~$w_{ab}$. 
    This is exactly property~\ref{case:b}.

    The next claim is used to ensure property~\ref{case:c} holds.
    Its proof relies on \cref{claim:inflexible-pairs}.

    \begin{clm}\label{claim:E_i}
        Every vertex is incident to at most~$4d^3$ edges in~$E_i$.    
    \end{clm}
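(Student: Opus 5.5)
Fix a vertex $v$; say $v\in A$, the case $v\in B$ being symmetric. We count the inflexible pairs $(a,b)\in A\times B$ whose chosen edge $e_{ab}$ contains $v$. Then $v$ is incident to at most that many edges of $E_i$, since repeated choices only reduce the count. Because $e_{ab}$ is either $aw_{ab}$ or $bw_{ab}$, and $v\in A$, the edge $e_{ab}$ contains $v$ in one of two ways: either $v$ is an endpoint of the pair, or $v=w_{ab}$ is the common neighbor. As in \cref{claim:phaseI}, we bound each case separately.

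\emph{Case 1: $v$ is an endpoint of the pair.} Since $v\in A$ and $e_{ab}$ contains $v$, we have $v=a$ (or $v=b$ in the symmetric case). By \cref{claim:inflexible-pairs}, $v$ lies in at most $2d^2$ inflexible pairs. So this case contributes at most $2d^2$ pairs.

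\emph{Case 2: $v=w_{ab}$.} Here $e_{ab}=av$ lies in $G[A]$, so $a\in N_A(v)$, and the pair $(a,b)$ is inflexible. We cannot use the trivial bound $\deg_A(v)\le n$, which is far too weak. Instead we charge the count to the partner $b$. The vertex $b$ satisfies $b\in N_B(v)$, because $v$ is a common neighbor of $a$ and $b$. Since $v$ has at most $d$ neighbors in the opposite part, there are at most $d$ choices for $b$. For each fixed $b$, \cref{claim:inflexible-pairs} allows at most $2d^2$ choices for $a$ forming an inflexible pair with $b$. This gives at most $2d^3$ pairs in this case.

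Adding the two cases, $v$ is incident to at most $2d^2+2d^3\le 4d^3$ edges of $E_i$, using $d\ge1$. If $d=0$, then $G[A,B]$ is empty, and neither inflexible pairs nor edges of $E_i$ exist. The main point to get right is Case 2: the bound must come from the partner $b$ in the opposite part, via $\deg_B(v)\le d$, rather than from $a$.
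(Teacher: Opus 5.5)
Your proof is correct and is essentially the paper's argument. You split according to whether $v=w_{ab}$ (at most $d$ cross-neighbors of $v$, each in at most $2d^2$ inflexible pairs by \cref{claim:inflexible-pairs}, giving $2d^3$) or $v$ is an endpoint of the pair (at most $2d^2$), exactly as the paper does. The one inaccuracy is your $d=0$ aside: if $G[A,B]$ were empty, every cross pair would be vacuously inflexible, so ``no inflexible pairs exist'' is false. The case cannot occur anyway, since $\delta(G)\ge n$ and $|A|=|B|=n$ force every vertex to have a neighbor in the opposite part, so $d\ge1$.
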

    \begin{proofclaim}
        Fix an arbitrary vertex $v\in V(G)$.
        We upper bound the number of inflexible pairs $(a,b)\in A\times B$ such that~$e_{ab}$ is incident to~$v$.
        Note that either $e_{ab}=aw_{ab}$ or $e_{ab}=bw_{ab}$.
        
        First, we bound pairs for which~$v=w_{ab}$.
        In this case, both~$a$ and~$b$ must be incident to~$v$.
        Note that~$v$ has at most~$d$ neighbors in the opposite part.
        In turn, each such neighbor is part of at most~$2d^2$ inflexible pairs by \cref{claim:inflexible-pairs}.
        Therefore, there are at most~$2d^3$ choices for~$a$ and~$b$.
        
        Next, we bound pairs for which~$v\neq w_{ab}$.
        We must have either $a = v$, if $v\in A$, or $b = v$, if~$v\in B$.
        Then we have at most~$2d^2$ choices for the other vertex of the inflexible pair, since~$v$ is part of at most~$2d^2$ inflexible pairs (by \cref{claim:inflexible-pairs}).

        We conclude that~$v$ is incident to at most~$2d^3+2d^2\le4d^3$ edges in~$E_i$, as required.    
    \end{proofclaim}

    \medskip

    \noindent{\bf Ensuring property\ref{case:a},\ref{case:b}, and~\ref{case:c}} 
    Property~\ref{case:b} holds by the definition of~$E_i$.
    By \cref{claim:E_f}, there exists a choice of~$E_f$ such that property~\ref{case:a} holds, every pair of vertices in~$A$ has at least~$\sqrt{n}/4$ common neighbors in $G[A]-E_f$, and every pair of vertices in~$B$ has at least~$\sqrt{n}/4$ common neighbors in $G[B]-E_f$.

    By \cref{claim:E_i}, each vertex in~$A$ is incident to at most~$4d^3$ edges in~$E_i$.
    It follows that every pair of vertices~$x,y\in A$ has at most~$8d^3$ common neighbors~$w$ such that~$xw\in E_i$ or~$yw\in E_i$.
    Therefore, for large $n$, every pair of vertices in~$A$ has at least~$\sqrt{n}/4-8d^3\ge2\log_2{n}$ common neighbors in~$G[A]-(E_f\cup E_i)$, since $d\le8^{-1}\log_2{n}$ and~$n$ is sufficiently large (say, $n\ge 3\cdot10^5$).
    Similarly, every pair of vertices in~$B$ has at least~$2\log_2{n}$ common neighbors in $G[B]-(E_f\cup E_i)$.
    Thus, property~\ref{case:c} holds, and so the proof is complete.\qed

\section{\texorpdfstring{Bad-pairs graph: proofs of \cref{thm:super-Dirac-general} and~\ref{thm:almost}}{Bad-pairs graph: proofs of Theorem 1.9 and 1.11}}\label{sec:bad-pairs}
In this section, we introduce a general framework to attack \cref{q:dirac}, and use it to prove \cref{thm:super-Dirac-general,thm:almost}.
We first introduce some notation.

Given a graph~$G$ and $t\ge0$, we say a pair of non-adjacent vertices $x,y\in V(G)$ is \emph{$t$-bad} if~$\size{N(x)\cap N(y)}\leq t$; if a non-adjacent pair is not $t$-bad, then it is \emph{$t$-good}. 
We define the \emph{$t$-bad-pairs graph} $B(G,t)$ to be the graph on vertex set $V(G)$, where a pair of vertices $(x,y)$ forms an edge if and only if~$(x,y)$ is a $t$-bad pair in~$G$. 

The general approach we take in this section is the following.
Given an $n$-vertex Dirac graph~$G$, we split the edge set into two disjoint sets~$E_1$ and~$E_2$ via a random procedure.
Roughly speaking, the random split ensures that, with high probability, a $t$-good pair in~$G$ has $\Omega(\log n)$ common neighbors in the graph $(V(G),E_1)$, and a $t$-bad pair has at least one common neighbor in the graph $(V(G),E_2)$; the formal statement is given in \cref{lem:technical}.
Our strategy is to color $E_1\cup E_2$ so that every $t$-good pair is rainbow connected in $(V(G),E_1)$ and every $t$-bad pair is rainbow connected in $(V(G),E_2)$.
We achieve this by coloring~$E_1$ randomly (using \cref{lem:random_coloring}), whereas~$E_2$ is colored in a deterministic fashion.
For the latter, we need to consider the structure of the bad-pairs graph~$B(G,t)$, which turns out to be ``almost'' bipartite (see e.g. \cref{lemma:bad-graph-girth}). 

The rest of the section is organized as follows.
We first state and prove a technical lemma (\cref{lem:technical}) that formalizes the aforementioned random splitting procedure of the edge set of a Dirac graph.
We combine \cref{lem:technical} with additional ideas to prove \cref{thm:super-Dirac-general} and \cref{thm:almost} in the second and third subsection, respectively. 

\subsection{A technical lemma: splitting the edge set of a Dirac graph}\label{sec:technical}

The following lemma states that the edge set of a graph satisfying an Ore-type condition can be randomly partitioned into sets~$E_1$ and~$E_2$ so that $t$-good pairs are likely to have many neighbors in $(V(G),E_1)$, whereas $t$-bad pairs are likely to have at least one common neighbor in $(V(G),E_2)$.

\begin{lem}\label{lem:technical} 
    Let $C\ge 0$ and $t> 0$.
    Let $G$ be an $n$-vertex graph such that  $\deg(u) +\deg(v)\geq n-1+C$ for every non-adjacent pair $u,v\in V(G)$.
    Let $p:=\sqrt{30\log_2(n)/t}$ and suppose $p < 1$.
    Partition the edges of $G$ into two sets $E_1\cup E_2$ by  assigning an edge to~$E_1$ with probability~$p$ and to~$E_2$ with probability $1-p$, independently of all other edges.
    The following hold:
    \begin{enumerate}[label=(\roman*)]
        \item The probability that every $t$-good pair has at least $2\log_2 n$ common neighbors in $(V(G),E_1)$ is at least~$1-1/n$.\label{case:good} 
        \item If $(u,v)$ is a $t$-bad pair, the probability that~$u$ and~$v$ have no common neighbor in $(V(G),E_2)$ is at most $(120\log_2(n)/t)^{\frac{C+1}{2}}$.\label{case:bad} 
    \end{enumerate}
\end{lem}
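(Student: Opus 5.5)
Part~\ref{case:good} is a Chernoff bound followed by a union bound. Part~\ref{case:bad} is a counting argument: a $t$-bad pair has a common neighbour with both incident edges in $E_2$ unless some edge of each of its few ``cherries'' lands in $E_1$, and few cherries means many edge-disjoint ones, each failing independently with probability at most $2p$.

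For part~\ref{case:good}, fix a $t$-good pair $(x,y)$, so $k:=|N(x)\cap N(y)|>t$. For each common neighbour $w$, the event that both $xw$ and $wy$ lie in $E_1$ has probability $p^2=30\log_2(n)/t$. These events are independent for distinct $w$, since the edge pairs are disjoint. Hence the number $X$ of common neighbours in $(V(G),E_1)$ is binomial with mean $kp^2>30\log_2 n$. The Chernoff lower-tail bound gives
\[
\mathbb P[X<2\log_2 n]\le \mathbb P[X\le \E[X]/2]\le \exp(-\E[X]/8)\le \exp(-30\log_2(n)/8),
\]
which is at most $n^{-3}$, since $30\log_2(n)/8 \ge 3.75\ln n$. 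A union bound over at most $\binom n2$ pairs then gives failure probability at most $1/n$.

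For part~\ref{case:bad}, let $(u,v)$ be $t$-bad, so $s:=|N(u)\cap N(v)|\le t$. By inclusion–exclusion and the Ore condition, $s\ge \deg(u)+\deg(v)-(n-2)\ge C+1$. The paths $uwv$ with $w\in N(u)\cap N(v)$ are pairwise edge-disjoint, and the pair fails exactly when every such path has an edge in $E_1$. Each path has this property independently with probability $1-(1-p)^2\le 2p$. So the failure probability is at most $(2p)^{s}$, and since $2p<1$ (assuming this), it is at most $(2p)^{C+1}$.

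Finally, $(2p)^{C+1}=(4p^2)^{\frac{C+1}{2}}=(120\log_2(n)/t)^{\frac{C+1}{2}}$, which is exactly the bound claimed.

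The one point needing care is the case $2p\ge1$. There $120\log_2(n)/t\ge1$, so the stated bound is at least $1$ and holds trivially. Apart from that, the only things to check are the Chernoff constant in part~\ref{case:good} and the lower bound $s\ge C+1$ in part~\ref{case:bad}. The latter uses $|N(u)\cup N(v)|\le n-2$, which holds because $u,v$ are non-adjacent and neither lies in its own neighbourhood.
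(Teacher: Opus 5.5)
Your proposal is correct and follows the paper's proof essentially step for step. For (i) you use a Chernoff lower-tail bound with mean at least $p^2t=30\log_2 n$ and a union bound over $\binom n2$ pairs; for (ii) you bound the failure probability by $(2p-p^2)^{|N(u)\cap N(v)|}\le(2p)^{C+1}$, using independence over the edge-disjoint paths $uwv$ and the Ore condition to get $|N(u)\cap N(v)|\ge C+1$, and you treat the case $2p\ge1$ as trivial exactly as the paper's footnote does (you are slightly more explicit than the paper about the independence in (i) and about $|N(u)\cup N(v)|\le n-2$).
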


\begin{rem}
    Note that, if the probability in \cref{lem:technical}~\ref{case:bad} is~$o(n^{-2})$, then the union bound implies that {\it every} $t$-bad pair has at least one common neighbor in $(V(G),E_2)$ with probability tending to one. 
    Since~$t\le n$, for this probability to be~$o(n^{-2})$, we need~$C\ge4$.
    This is the reason we need a surplus in the minimum degree in  \cref{thm:super-Dirac-general}.
\end{rem}

\begin{proof}[Proof of \cref{lem:technical}]
First we prove~\ref{case:good}. 
Let $x,y\in V(G)$ be a $t$-good pair, that is, $x$ and $y$ share more than $t$ common neighbors.
The probability that a common neighbor $z\in N(x)\cap N(y)$ remains a common neighbor in $(V(G),E_1)$ is exactly $p^2$. We then have 
\begin{align*}
    \E [\size{\set{z\in N(x)\cap N(y)\,:\, xz,yz\in E_1}}] \geq p^2t \ge 30\log_2 n. 
\end{align*}
Then, by the Chernoff bound, the probability that $\size{\set{z\in N(x)\cap N(y)\,:\, xz,yz\in E_1}}\le 2\log_2 n$ is at most $\exp(-30\log_2 n/8) \le 1/n^3$. 
Thus, taking a union bound over all  $t$-good pairs, of which there are at most $\binom{n}{2}$, we find that with probability at least $1-1/n$ every such pair has more than~$2\log_2n$ common neighbors in $(V(G),E_1)$.

Next, we prove~\ref{case:bad}. 
Let $u,v\in V(G)$ be a $t$-bad pair and $w\in N(u)\cap N(v)$ be a common neighbor. The probability that at least one of~$uw$ and~$vw$ is not in $(V(G),E_2)$ is precisely $1-(1-p)^2 = 2p-p^2\leq 2p$. Thus, the probability that, for every common neighbor~$w$ of~$u$ and~$v$, at least one of~$uw$ and~$vw$ is not in~$E_2$, is bounded above by $(2p)^{\size{N(u)\cap N(v)}}$. 
Since~$u$ and~$v$ are not adjacent, they have at least 
$$\deg(u)+\deg(v)-(n-2)\geq n-1+C-(n-2) =C+1$$ common neighbors in $G$.
Thus, the above probability is bounded above by\footnote{Here we use monotonicity and $2p\le 1$. If $2p> 1$, then the bound is trivial, since a probability is always at most 1.} $(2p)^{C+1}=$\\
$(120\log_2(n)/t)^{\frac{C+1}{2}}$, as required.
\end{proof}

\subsection{\texorpdfstring{Proof of \cref{thm:super-Dirac-general}}{Proof of Theorem 1.9}}

In this subsection, we prove \cref{thm:super-Dirac-general} using \cref{lem:technical}.
The following theorem is equivalent to \cref{thm:super-Dirac-general} but is phrased using the terminology of this section. 

\begin{thm}\label{thm:bipartite-bad-pairs} 
    Let~$C\ge4$.
    Let $G$ be an $n$-vertex graph such that  $\deg(u) +\deg(v)\geq n-1+C$ for every non-adjacent pair $u,v\in V(G)$.
    If $n$ is sufficiently large and $B(G,n^{8/(2C+1)})$ is bipartite, then~$G$ admits an $\mathrm{rc}2$-coloring.
\end{thm}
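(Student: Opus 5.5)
We will apply \cref{lem:technical} with $t:=n^{8/(2C+1)}$, colour $E_1$ randomly via \cref{lem:random_coloring}, and colour $E_2$ deterministically using the bipartition of $B(G,t)$. By \cref{remark:non-adjcanet}, it suffices to connect every non-adjacent pair by a path with exactly one red and one blue edge. Every non-adjacent pair is either $t$-good or $t$-bad.

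\emph{Step 1: choosing the split.} With $t=n^{8/(2C+1)}$ we have $p=\sqrt{30\log_2(n)/t}<1$ for $n$ large, so \cref{lem:technical} applies. By part~\ref{case:good}, with probability at least $1-1/n$ every $t$-good pair has at least $2\log_2 n$ common neighbours in $(V(G),E_1)$. By part~\ref{case:bad}, a fixed $t$-bad pair has no common neighbour in $(V(G),E_2)$ with probability at most
\[
(120\log_2 n)^{\frac{C+1}{2}}\, n^{-\frac{4(C+1)}{2C+1}} .
\]
Since $\frac{4(C+1)}{2C+1}=2+\frac{2}{2C+1}>2$, multiplying by the at most $\binom n2$ bad pairs gives $o(1)$. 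Hence for large $n$ there is a partition $E(G)=E_1\sqcup E_2$ with the following property. Every $t$-good pair has at least $2\log_2 n$ common neighbours in $(V(G),E_1)$. Every $t$-bad pair $(u,v)$ has a common neighbour $w_{uv}$ with $uw_{uv},vw_{uv}\in E_2$. This exponent bookkeeping is the only quantitative point. The choice $t=n^{8/(2C+1)}$ is exactly what makes the union bound go through, and it also explains the hypothesis $C\ge4$.

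\emph{Step 2: colouring $E_1$.} Apply \cref{lem:random_coloring} to the graph $H=(V(G),E_1)$. This gives a $2$-colouring of $E_1$ in which every $t$-good pair is joined by a rainbow path of length two inside $E_1$.

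\emph{Step 3: colouring $E_2$.} Let $V(G)=X\sqcup Y$ be a bipartition of $B(G,t)$, so every $t$-bad pair has one vertex in $X$ and one in $Y$. Colour an edge of $E_2$ red if both endpoints lie in the same part ($X$ or $Y$), and blue if it goes between $X$ and $Y$. Take a bad pair $u\in X$, $v\in Y$ and its middle vertex $w=w_{uv}$. If $w\in X$, then $uw$ lies inside $X$ and is red, while $wv$ crosses and is blue. If $w\in Y$, then $uw$ crosses and is blue, while $wv$ lies inside $Y$ and is red. Either way $uwv$ is rainbow.

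Since $E_1$ and $E_2$ are disjoint, Steps 2 and 3 together define a $2$-colouring of all of $E(G)$. In it every non-adjacent pair is joined by a rainbow path, so by \cref{remark:non-adjcanet} it is an rc$2$-colouring. The only place where care is needed is the exponent comparison in Step 1; once the middle vertices $w_{uv}$ exist, the part-based colouring of $E_2$ is straightforward.
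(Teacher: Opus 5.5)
Your proposal is correct and follows the paper's proof essentially verbatim. It uses the same $t=n^{8/(2C+1)}$, the same union bound $\binom n2(120\log_2 n/t)^{(C+1)/2}\le(120\log_2 n)^{(C+1)/2}n^{-2/(2C+1)}=o(1)$, \cref{lem:random_coloring} on $E_1$, and the bipartition-based colouring of $E_2$; your red and blue are swapped relative to the paper, which is immaterial.

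One small aside: your exponent computation works for every $C\ge0$, so it is not what forces $C\ge4$. That hypothesis is what makes $t\le n$, which keeps the bipartiteness assumption on $B(G,t)$ non-degenerate (cf.\ the remark following \cref{lem:technical}).
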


\begin{proof}
    Let $t:=n^{8/(2C+1)}$ and $p := \sqrt{30\log_2(n)/t}$.
    Note that $p < 1$ for~$n$ sufficiently large.
    Partition the edges of $G$ into sets $E_1\cup E_2$, where an edge is assigned to~$E_1$ with probability~$p$ and to~$E_2$ with probability $1-p$, independently of all other edges.
    By \cref{lem:technical}~\ref{case:bad} and the union bound, the probability that there is some $t$-bad pair with no common neighbor in~$(V(G),E_2)$ is at most
    $$n^2\cdot(120\log_2(n)/t)^{\frac{C+1}{2}}
    =(120\log_2{n})^{\frac{C+1}{2}}\cdot n^{-2/(2C+1)}\le 1/2,$$
    where the last inequality holds since~$n$ is sufficiently large.
    By this and \cref{lem:technical}~\ref{case:good}, it follows that there exists a choice of $E_1\cup E_2$ such that every $t$-good pair has at least~$2\log_2n$ common neighbors in $(V(G),E_1)$ and every $t$-bad pair has at least one common neighbor in $(V(G),E_2)$.  
    Fix such a partition. 
    
    It remains to color the edges of~$G$. 
    By \cref{lem:random_coloring}, there is a $2$-edge-coloring of $(V(G),E_1)$ such that every $t$-good pair is connected by a rainbow path. 
    Let~$U,W$ be a bipartition of the bad-pairs graph~$B(G,t)$. 
    We color all edges in $G[U,W]\cap E_2$ red and all other edges in~$E_2$ blue. 
    Every $t$-bad pair is of the form $a\in U$ and $b\in W$, since~$B(G,t)$ is bipartite.
    We know there is a common neighbor~$w$ of~$a$ and~$b$ such that $aw,bw\in E_2$.
    Without loss of generality, we may assume that~$w\in W$. 
    Then~$aw$ is red, while~$wb$ is blue, giving the required rainbow path.
\end{proof}

\subsection{\texorpdfstring{Proof of \cref{thm:almost}}{Proof of Theorem 1.11}}

In this subsection, we prove \cref{thm:almost} using \cref{lem:technical}.
We start with a general fact about bad-pairs graphs that might be of independent interest.
The {\it odd girth} of a graph is the length of its shortest odd cycle; it is defined to be infinity if the graph is bipartite.
The next lemma establishes a lower bound on the odd girth
of~$B(G,t)$ for a graph~$G$ with the assumed minimum degree.

\begin{lem}\label{lemma:bad-graph-girth}
    For any $n$-vertex graph $G$ with $\delta(G)\ge (n-1)/2$ and $t\ge 1$, the graph~$B(G,t)$ has odd girth at least $n/(2t+1)$. 
\end{lem}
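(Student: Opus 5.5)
The plan is a double-counting argument over an odd cycle in $B(G,t)$. Suppose $B(G,t)$ contains an odd cycle $x_1x_2\dots x_kx_1$, with $k$ odd and indices taken mod $k$. We will show $n\le k(2t+1)$, which gives $k\ge n/(2t+1)$. Write $N_i:=N(x_i)$. For each vertex $v\in V(G)$ let $S(v):=\set{i\in\mathbb Z_k: v\in N_i}$ and $T(v):=\mathbb Z_k-S(v)$. View $S(v),T(v)$ as a partition of the vertex set of the cycle $C_k$ on $\mathbb Z_k$ (edges $\{i,i+1\}$). Write $e(S)$ and $e(T)$ for the number of cycle edges inside $S(v)$ and inside $T(v)$.

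\textbf{Upper bound.} A cycle edge $\{i,i+1\}$ lies inside $S(v)$ exactly when $v$ is a common neighbor of $x_i$ and $x_{i+1}$. Since $(x_i,x_{i+1})$ is $t$-bad, each of the $k$ cycle edges is counted for at most $t$ vertices $v$. Hence
\[
\sum_{v\in V(G)} e(S(v))\le kt .
\]

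\textbf{Lower bound.} Fix $v$. Counting degrees in $C_k$ gives $2|S|=2e(S)+e(S,T)$, $2|T|=2e(T)+e(S,T)$ and $k=e(S)+e(T)+e(S,T)$. From these, $e(S)-e(T)=|S|-|T|=2|S|-k$. Since $C_k$ is an odd cycle, $(S,T)$ is not a proper bipartition, so $e(S)+e(T)\ge1$. Adding the two relations gives
\[
e(S(v))\ge \frac{2|S(v)|-k+1}{2}.
\]
Summing over all $v$ and using $\sum_v|S(v)|=\sum_i\deg(x_i)\ge k(n-1)/2$, we get
\[
\sum_v e(S(v))\ge \frac{k(n-1)-kn+n}{2}=\frac{n-k}{2}.
\]

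\textbf{Conclusion and main obstacle.} Combining the two bounds gives $(n-k)/2\le kt$, i.e.\ $n\le k(2t+1)$, as required. The hard step is the lower bound. The naive estimate $e(S)\ge 2|S|-k$ sums to only $-k$, which says nothing. The gain comes from oddness: for every single vertex $v$, at least one cycle edge is monochromatic under the partition $(S(v),T(v))$, and this contributes an extra $1/2$ per vertex, hence $n/2$ in total. The rest is routine.
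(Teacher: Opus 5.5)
Your proof is correct and reaches the paper's bound $n\le k(2t+1)$, but by a genuinely different route. The paper argues locally along the cycle. Since $v_{i+1}$ forms a bad pair with both $v_i$ and $v_{i+2}$, and $|N(v_{i+1})|\ge (n-1)/2$, the neighborhoods of $v_i$ and $v_{i+2}$ nearly coincide: $|N(v_i)\Delta N(v_{i+2})|\le 4t+2$. Chaining this with the triangle inequality for $\Delta$ along $v_1,v_3,\dots,v_\ell$ (possible because $\ell$ is odd) shows that the bad pair $v_1,v_\ell$ has neighborhoods of size at least $(n-1)/2$ differing in at most $(2t+1)(\ell-1)$ elements. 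This forces $|N(v_1)\cap N(v_\ell)|>t$ unless $\ell\ge n/(2t+1)$. You instead double count globally over the vertices of $G$. Each $v$ induces a $2$-coloring $(S(v),T(v))$ of the odd cycle, and oddness gives every $v$ a monochromatic cycle edge. The identity $e(S)-e(T)=|S|-|T|$ then converts this into a lower bound on the $S$-monochromatic edges alone, and these are exactly what badness caps at $t$ per cycle edge. Your version is symmetric, with no distinguished vertex and no WLOG step. It also uses the degree hypothesis only through $\sum_i\deg(x_i)\ge k(n-1)/2$. Since consecutive cycle vertices are non-adjacent in $G$, your argument therefore applies verbatim under the Ore-type condition $\deg(u)+\deg(v)\ge n-1$ for non-adjacent $u,v$, which is the kind of hypothesis the paper works with elsewhere. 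The paper's route instead yields a local structural fact of independent interest: two vertices two steps apart on a path in $B(G,t)$ have neighborhoods differing in at most $4t+2$ elements. This makes transparent why $B(G,t)$ is close to bipartite.
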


\begin{proof}
    Suppose $v_1v_2\dots v_\ell v_1$ is an odd cycle in~$B(G,t)$ for some odd~$\ell\geq 3$.
    \begin{clm}
        For every $i\in[\ell-2]$, we have $\size{N(v_i)\Delta N(v_{i+2})}\le 4t+2$.
    \end{clm}
    \begin{proof}
        Since $(v_i,v_{i+1})$ and $(v_{i+1},v_{i+2})$ are $t$-bad, there are at least $\size{N(v_{i+1})}-2t$ vertices in~$N(v_{i+1})$ that belong to neither~$N(v_{i})$ nor~$N(v_{i+2})$.  
        Thus, we have
        $$\size{N(v_i)\cup N(v_{i+2})}\le \size{V(G)}-\size{N(v_{i+1})}+2t\le \tfrac{n+1}{2}+2t,$$
        where the last inequality follows from the fact that $\delta(G)\ge (n-1)/2$.
        Hence,
        $$\size{N(v_i)\Delta N(v_{i+2})}=2\size{N(v_i)\cup N(v_{i+2})}-\size{N(v_i)}-\size{N(v_{i+2})}\le 2\cdot\parens*{\tfrac{n+1}{2}+2t}-\tfrac{n-1}{2}-\tfrac{n-1}{2}=4t+2.$$
    \end{proof}
    Note that any sets $U,V,W$ satisfy $U\Delta V \subseteq (U\Delta W) \cup (W\Delta V)$. Using this triangle inequality, and the previous claim, we obtain
    $$\size{N(v_1)\Delta N(v_\ell)}\le\sum_{k=1}^{(\ell-1)/2}\size{N(v_{2k-1})\Delta N(v_{2k+1})}\le \frac{(\ell-1)(4t+2)}{2} = (2t+1)(\ell-1).$$
    Without loss of generality, we may assume $\size{N(v_1)\setminus N(v_\ell)}\le \size{N(v_1)\Delta N(v_\ell)}/2$.
    Hence,
    $$\size{N(v_1)\cap N(v_\ell)}= \size{N(v_1)}-\size{N(v_1)\setminus N(v_\ell)}\ge \frac{n-1}{2}-\frac{(2t+1)(\ell-1)}{2}.$$
    Since $(v_1,v_\ell)$ is $t$-bad, we have $\size{N(v_1)\cap N(v_\ell)}\le t$.
    Rearranging yields~$\ell\ge n/(2t+1)$, as required.
\end{proof}

\begin{proof}[Proof of \cref{thm:almost}]

    Let $t:=(\log_2 n)^2$ and $p := \sqrt{30\log_2(n)/t} = \sqrt{30/\log_2n}$.
    Note that $p < 1$ for~$n$ sufficiently large.
    Partition the edges of $G$ into sets $E_1\cup E_2$, where an edge is assigned to~$E_1$ with probability~$p$ and to~$E_2$ with probability~$1-p$, independently of all other edges.

    Let~$F$ be the set of $t$-bad pairs with no common neighbor in $(V(G),E_2)$.
    By \cref{lem:technical}~\ref{case:bad} applied with $C=0$, we have
    $$\mathbb E[\size{F}]\le\frac{n^2}{2}\cdot(120\log_2(n)/t)^{\frac{1}{2}}=o(n^2).$$
    By Markov's inequality, we have $\mathbb P[\size{F} \ge 2\mathbb E[\size{F}]]\le 1/2$.
    In particular, $\mathbb P[\size{F} < 2\mathbb E[\size{F}]]\ge 1/2$ and $2\mathbb{E}[\size{F}] =o(n^2)$.
    Combining this with \cref{lem:technical}~\ref{case:good}, we conclude that there exists a choice of $E_1\cup E_2$ such that every $t$-good pair has at least~$2\log_2n$ common neighbors in $(V(G),E_1)$, and all but $o(n^2)$ $t$-bad pairs have a common neighbor in $(V(G),E_2)$.  
    Fix such a partition.

    By \cref{lemma:bad-graph-girth}, the graph $B:=B(G,t)$ has odd girth at least $n/(2t+1)$.
    A classic result of Andr{\'a}sfai, Erd{\H o}s, and S{\'o}s~\cite{AndrasfaiES} states that a non-bipartite $n$-vertex graph with odd girth at least~$\ell$ has a vertex of degree at most $2n/\ell$.
    Now, we iteratively remove the edges incident to vertices of minimum positive degree from~$B$ until the remaining graph~$B'$ is bipartite, say with vertex classes~$U$ and~$W$.
    By the result in~\cite{AndrasfaiES}, it follows that we have removed~$o(n^2)$ edges from~$B$.

    It remains to color the edges of~$G$. 
    By \cref{lem:random_coloring}, there is a $2$-edge-coloring of $(V(G),E_1)$ such that every $t$-good pair is connected by a rainbow path. 
    Finally, we color the edges in~$G[U,W]\cap E_2$ red and the remaining edges in~$E_2$ blue. 
    Note that all $t$-bad pairs with at least one common neighbor in~$(V(G),E_2)$ and corresponding to an edge in~$B'$, are connected by a rainbow path in~$(V(G),E_2)$.
    This concludes the proof, since there are~$o(n^2)$ $t$-bad pairs that  do not have a common neighbor in in~$(V(G),E_2)$ or correspond to an edge in $E(B)\setminus E(B')$.
\end{proof}

\section{Concluding remarks}\label{sec:conclusion}

In this paper, we provided an affirmative answer to \cref{q:dirac} for a particular type of Dirac graphs; we also showed that allowing a small surplus in the minimum degree condition allows us to handle a much broader class. As a first step towards answering the original question, it would help to investigate whether allowing a slightly larger minimum degree guarantees the existence of an rc2-coloring.
We believe \cref{thm:super-Dirac-general} is evidence that the following holds.

\begin{conj}
    There exists a universal constant~$C>0$ such that, if~$G$ is a graph with~$\delta(G)\ge v(G)/2+C$, then $\mathrm{rc}(G) \le 2$.
\end{conj}

In most of our proofs, we are looking at the common neighbors of two non-adjacent vertices. 
In particular, we saw in \cref{thm:super-Dirac-general} that an Ore-type condition on the degrees of non-adjacent vertices is enough to guarantee the existence of an rc2-coloring. 
Dong and Li~\cite{dl} initiated this Ore-type line of study in the context of rainbow connections. Let $\sigma_2$ denote $\min\{\mathrm{deg}(v)+\mathrm{deg}(u)\}$ over all non-adjacent vertices $u,v$ in a graph $G$. They proved the following.
\begin{thm}[Dong and Li~\cite{dl}] For a connected $n$-vertex graph~$G$, we have $\mathrm{rc}(G)\le \frac{6n}{\sigma_2+2}+8$.
\end{thm}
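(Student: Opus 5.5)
Since this theorem of Dong and Li is quoted from~\cite{dl} rather than proved in this paper, I would reprove it with the standard approach for rainbow connection bounds in terms of degree parameters: find a small connected dominating-type structure, then color it cleverly. Write $\sigma:=\sigma_2$ and assume $G$ is not complete, since otherwise the bound is trivial.

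\textbf{Step 1: a sparse $2$-step dominating set.} Greedily pick vertices $x_1,x_2,\dots$ that are pairwise at distance at least~$3$, until every vertex lies within distance~$2$ of the chosen set~$S$. Any two chosen vertices are non-adjacent, so $\deg(x_i)+\deg(x_j)\ge\sigma$. Their closed neighborhoods $N[x_i]$ are pairwise disjoint, because the pairwise distance is at least~$3$. Since all but at most one $x_i$ has degree at least $\sigma/2$, summing the closed neighborhood sizes gives $n\ge |S|(\sigma/2+1)-O(\sigma)$. Hence $|S|\le \frac{2n}{\sigma+2}+O(1)$.

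\textbf{Step 2: connect $S$ cheaply.} Because every vertex is within distance~$2$ of~$S$ and $G$ is connected, consecutive dominators can be joined by paths of length at most~$3$ (the usual argument for dominating sets of radius~$2$). This yields a tree $T$ containing $S$ with at most $3(|S|-1)$ edges, together with the sets $N[x_i]$ attached. Give every edge of~$T$ its own distinct color; this costs at most $\frac{6n}{\sigma+2}+O(1)$ colors. Then handle the last two layers, namely vertices at distance $1$ and $2$ from~$S$, with a constant number of extra colors. The standard trick is to color the edges from $N(x_i)$ to second-layer vertices, and the edges $x_i$--$N(x_i)$, using a fixed palette of about $4$ fresh colors in a ``two-way'' fashion. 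Each second-layer vertex $u$ then has two differently colored routes up to the tree: one with colors $\{a,b\}$ and another with colors $\{c,d\}$. For two such vertices $u,v$, we pick the route for $u$ using $\{a,b\}$ and the route for $v$ using $\{c,d\}$, and join them through the rainbow tree. This is the Chandran--Das--Rajendraprasad--Varma connected-two-step-dominating-set technique, and it gives the additive constant~$8$.

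\textbf{Main obstacle.} I expect the hardest part to be Step 2. One must guarantee that each second-layer vertex really has two color-disjoint routes, or else handle the vertices with only one route (leaves or bridges) by a separate argument. One must also keep the additive constant at exactly~$8$, which requires careful bookkeeping of the $O(1)$ terms from Step 1, in particular the possibly low-degree exceptional dominator. My first attempt would be to require the dominating structure to be ``connected $2$-way'': every vertex outside the structure has at least two neighbors in the structure, unless it is a cut-vertex situation, and those cases are treated directly.
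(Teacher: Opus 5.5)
The paper only quotes this result from Dong and Li and gives no proof, so your proposal has to stand on its own. It has a genuine gap at Step~2. You claim that once the tree $T\supseteq S$ is rainbow colored, the two outer layers can be handled with a constant number of fresh colors. That is false for the structure Step~1 produces.

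Take a vertex $x$ adjacent to $w_1,\dots,w_q$, and join each $w_j$ to every vertex of its own clique $Q_j$ of order $q$. Then $n=q^2+q+1$ and $\sigma_2=2q$. The minimum is attained by $x$ and a vertex of some $Q_j$, or by two vertices in different cliques. Every vertex is within distance $2$ of $x$, so your greedy procedure may stop with $S=\{x\}$ and $T$ edgeless. Yet each edge $xw_j$ is a bridge, and every path from $Q_j$ to $Q_k$ uses both $xw_j$ and $xw_k$. So these $q$ edges need pairwise distinct colors, and $\mathrm{rc}(G)\ge q$. In particular, every route from a vertex of $Q_j$ up to $T$ passes through the same edge $xw_j$. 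The two color-disjoint routes you rely on do not exist, and no constant palette can work. The theorem itself is consistent here, since $6n/(\sigma_2+2)\approx 3q$.

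The one-route situations you flag under ``Main obstacle'' are therefore not a side case that can be treated directly at constant cost; they must be paid for in the main term. You need two things. First, a connected set $D$ that is two-way two-step dominating in the sense of Chandran, Das, Rajendraprasad and Varma: every pendant vertex lies in $D$, and every vertex at distance $2$ from $D$ has at least two neighbors at distance $1$. Their lemma then gives $\mathrm{rc}(G)\le\mathrm{rc}(G[D])+O(1)$. Second, a proof that this augmented set satisfies $|D|\le 6n/(\sigma_2+2)+O(1)$.

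That size bound is where the real content lies, and your bound on $|S|$ says nothing about it. You need an accounting in which every batch of at most three vertices added to $D$ is paid for by roughly $\sigma_2/2+1$ previously uncharged vertices. This includes the vertices added to repair one-route situations, such as the $w_j$ in the example above.

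A smaller, fixable point concerns Step~1. An arbitrary maximal set with pairwise distances at least $3$ need not have neighboring members at distance $3$; on a path, $\{0,5,10,\dots\}$ is such a set. To get connecting paths of length at most $3$, and hence $|E(T)|\le 3(|S|-1)$, you must choose each new vertex at distance exactly $3$ from the current set.
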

In light of this result and our work, it is natural to formulate the following strengthening of \cref{q:dirac}.

\begin{ques}
    Is it true that every $n$-vertex graph $G$ in which every pair $u,v$ of non-adjacent vertices satisfies $\deg(u)+\deg(v) \geq n$ has an rc2-coloring?
\end{ques}

It would be particularly interesting to show that the two questions have different answers. 
We remark that $n$ is a sharp lower bound here, as demonstrated by the construction described after \cref{remark:non-adjcanet}.

\medskip

In light of \cref{remark:eps-close}, a natural question is whether we can strengthen \cref{thm:Dirac-K_nn,thm:Dirac-2K_n} by assuming that~$G$ is ``close'' to~$K_{n,n}$ or~$2K_n$ in the sense meant in~\cite[Lemma~26]{klo}. Given~$\eps>0$, we say that a $2n$-vertex graph $G$ is \emph{$\eps$-close to $K_{n,n}$} if there exists a subset $A\subseteq V(G)$ with $\size{A}=n$ and $e(A) < \eps n^2$. Similarly, $G$ is \emph{$\eps$-close to $2K_{n}$} if there is a partition $V(G) = A\sqcup B$ with $\size{A} = n$ and $e(A,B) < \eps n^2$. 

\begin{ques}\label{q:eps-close}
    Fix~$\eps>0$ and let~$n$ be sufficiently large.
    If $G$ is a $2n$-vertex Dirac graph that is either $\eps$-close to~$K_{n,n}$ or to~$2K_n$, does $\mathrm{rc}(G)=2$ hold?
\end{ques}

Additionally, it would be very interesting to make progress for so-called {\it robust expanders}, which are the third class of graphs in the trichotomy from~\cite[Lemma~26]{klo}. 
Given $0<\nu\leq \tau<1$, an $n$-vertex graph $G$ is a \emph{$(\nu,\tau)$-robust expander} if, for every subset $S\subseteq V(G)$ with $\tau n\leq \size{S} \leq (1-\tau)n$, there are at least $\size{S}+\nu n$ vertices $w$ such that $\deg_S(w)\geq \nu n$.

\begin{ques}
    If $G$ is a Dirac graph that is a $(\nu,\tau)$-robust expander, for some suitable~$\nu$ and~$\tau$, does $\mathrm{rc}(G)=2$ hold?
\end{ques}

Some natural classes of (super-)Dirac graphs to investigate next are those arising from groups, that is, Cayley graphs and more specifically circulant graphs (Cayley graphs of the cyclic group~$\mathbb{Z}_n$). We wonder if the extra structure of these graphs makes it easier to prove the existence of an rc2-coloring in such graphs. 

\begin{ques}
    Does every Dirac circulant (or Cayley) graph have an rc2-coloring?
\end{ques}

One might hope that a simple ``generator-based'' coloring will work, that is, assigning the same color to all edges corresponding to the same generator. However, consider the graph on vertex set $\mathbb Z_{4k}$, where $ij$ is an edge whenever $\size{i-j}\leq k \pmod{4k}$; it is easy to see that, if we decide the color of an edge $ij$ solely based on $\size{i-j}$, then we can never guarantee a rainbow path between antipodal vertices (e.g., $0$ and $2k$). However, some computational experiments we carried out for small groups suggest that increasing the minimum degree slightly (e.g., by a small constant) might guarantee the existence of such a generator-based rc2-coloring.  

\begin{ques}
    Does every circulant (or Cayley) graph $G$ of minimum degree $v(G)/2+C$ for some constant $C$ have a generator-based rc2-coloring?
\end{ques}

Of course, Cayley graphs belong to the more general class of vertex-transitive graphs, which is in itself an interesting case to tackle.

\medskip

Finally, recall that \cref{thm:ChakrabortyFMY} implies that if $\delta(G)\ge (v(G)-1)/2$, and~$v(G)$ is sufficiently large, then $\mathrm{rc}(G) \le 3$. 
Notice that in this case we allow rainbow paths of length at most three. We wonder if the following stronger variant holds.

\begin{ques}\label{q:strong-rainbow}
    Let $G$ be a graph with $\delta(G)\ge (v(G)-1)/2$. Is it possible to 3-color the edges of $G$ so that every non-adjacent pair of vertices is connected by a non-monochromatic path of length two? 
\end{ques}

This is again a special case of a previously studied problem, that of the \emph{strong rainbow connection number}. Formally, this parameter, denoted $\mathrm{src}(G)$, captures the smallest number of colors   needed to color the edges of $G$ so that every pair of distinct vertices is connected by a rainbow geodesic (a shortest path).
Using this terminology, \cref{q:strong-rainbow} asks whether $\delta(G)\ge (v(G)-1)/2$ implies $\mathrm{src}(G)\le 2$.

Note that the minimum degree requirement in \cref{q:strong-rainbow} is best possible, as below this minimum degree a graph need not be connected. 
On the other hand, the cyclic graph~$H$ showing \cref{q:dirac} is sharp has $\mathrm{src}(H)\ge \mathrm{rc}(H)\ge3$. We can show that the suggested bound on $\mathrm{src}$ is also best possible by giving a concrete edge-coloring of $H$ with three colors. For this, consider the Hamilton cycle consisting of the edges of length $k$. We color one edge with 1 and the remaining edges alternately with 2 and 3. 
We color every other edge with 1.
Consider now two non-adjacent vertices $x$ and $y$. Let their cyclic distance be $d$, where $k+1\le d\le 2k$. 
If~$d=2k$, then there is one geodesic between $x$ and $y$ and the two edges have different colors by construction.
If $d=k+r$, where $r<k$, then there are two geodesic paths between $x$ and $y$ such that one edge has cyclic length $k$. 
One of these length $k$ edges has color different from 1, and therefore forms a rainbow path with the edge of length $r$, which is colored with 1. 

\section*{Acknowledgments}
All three authors were supported by ERC Advanced Grants ``GeoScape'', no.~882971 and ``ERMiD'', no.~101054936.

\end{document}